\numberwithin{equation}{section}
\newtheorem{theorem}[equation]{Theorem}
\newtheorem{proposition}[equation]{Proposition}
\newtheorem{corollary}[equation]{Corollary}
\newtheorem{lemma}[equation]{Lemma}
\newtheorem*{theorem*}{Theorem}
\theoremstyle{remark}
\newtheorem{example}[equation]{Example}
\newtheorem{remark}[equation]{Remark}
\newtheorem*{remark*}{Remark}
\def\Aut{\mathop{\operator@font Aut}}
\newcommand{\xycenter}[1]{\begin{center}
                          \mbox{\xymatrix{#1}}
                          \end{center}
                         }
\begin{document}

\title[Vector bundles over Kodaira surfaces]{
Two-dimensional moduli spaces of vector bundles over Kodaira surfaces}

\author[APRODU, MORARU \& TOMA]{
Marian Aprodu, Ruxandra MORARU \& Matei TOMA}
\subjclass[2000]{32G13, 32J15, 32L05}
\keywords{compact complex surfaces, moduli spaces, Kodaira surfaces, vector bundles}

\thanks{Marian Aprodu was partly supported by the grant
PN-II-ID-PCE-2011-3-0288 and LEA MathMode. Ruxandra Moraru was partially supported by NSERC.
Matei Toma was partly supported by LEA MathMode.}

\address{{\it Marian Aprodu}: 
Romanian Academy, 
Institute of Mathematics "Simion Stoilow" 
P.O. Box 1-764, RO 014700, 
Bucharest, Romania}
\email{marian.aprodu@imar.ro}
\address{{\it Marian Aprodu}:
\c Scoala Normal\u a Superioar\u a Bucure\c sti, Calea Grivi\c tei 21,
RO-010702, Bucharest, Romania}

\address{{\it Ruxandra Moraru}:
Department of Pure Mathematics,
University of Waterloo,
200 University Avenue West, Waterloo, Ontario
Canada N2L 3G1}
\email{moraru@math.uwaterloo.ca}

\address{{\it Matei Toma}: Institut \'Elie Cartan Nancy, UMR 7502, Universit\'e de Lorraine, CNRS, INRIA, Boulevard des Aiguillettes, B.P. 70239, 54506 Vandoeuvre-l\`es-Nancy Cedex, France }
\email{Matei.Toma@iecn.u-nancy.fr}

\begin{abstract}
We prove that any two-dimensional moduli space of stable 2-vector
bundles, in the non-filtrable range, on a primary Kodaira surface is a 
primary Kodaira surface.
If a universal bundle exists, then the two surfaces are
homeomorphic up to unramified covers. 
\end{abstract}

\maketitle

\section{Introduction}

Non-trivial examples of holomorphic symplectic
manifolds have been obtained from moduli spaces of semi-stable sheaves
over projective holomorphic symplectic surfaces (Mukai, Tyurin, O'Grady,
see \cite{HL} and the references therein). 
By classification, any such surface is either K3 or abelian. 
Absent the projectivity assumption, or even the K\"ahler condition, 
one more class of holomorphic symplectic surfaces appears.
Precisely, these are the primary Kodaira surfaces, which are defined
as topologically non-trivial principal elliptic bundles 
$X\to B$ over an elliptic curve $B$.
It is known that moduli spaces of stable sheaves on
primary Kodaira surfaces inherit the holomorphic symplectic
structure. In some situations, these moduli spaces are
compact \cite{Toma}. 

In this paper, we study two-dimensional moduli spaces $\mathcal M$
of stable rank-2 vector bundles with fixed determinant and fixed second Chern class
on primary Kodaira surfaces; here by ``fixed determinant"
we mean that the determinant line bundle has a fixed
isomorphism type. 
The first observation is that they are compact if
we place ourselves in the {\em non-filtrable range}
(see section \ref{sec:2-dim}); note that this hypothesis is
equivalent to the apparently weaker condition (*) from 
\cite{Toma} which says that every semi-stable vector bundle
with suitable topological invariants is stable.
We show that, for these spaces, quasi-universal sheaves
exist and that they are sometimes even universal sheaves.
Using the spectral cover construction from \cite{Brinzanescu-Moraru1},
we infer that the spaces $\mathcal M$ are principal elliptic bundles over
the same elliptic curve $B$, hence they are either
tori or primary Kodaira surfaces. We perform an
analysis of the torus case, and we note that a
universal family exists after passing to an \'etale cover. We are led to a situation where $X$ may be seen as a parameter space for vector bundles on $\mathcal M$. Using 
 the techniques developed in \cite{Teleman-CCM},
we get a contradiction of the fact that $\mathcal M$ was supposed to be K\"ahler.
The bottom line is that these spaces $\mathcal M$ must be
primary Kodaira surfaces. 

In the last section, we prove that if a universal sheaf
exists, then the moduli space $\mathcal M$ is homeomorphic
to an unramified cover of the original base surface $X$.

\medskip
{\em Acknowledgements.} We would like to thank
the referee for  carefully reading the paper and
for his pertinent remarks, which helped improve
the exposition of the paper.

\section{Rank-2 vector bundles on Kodaira surfaces} \label{general}
\label{sec:rank-2}

For any smooth compact complex surface $X$ and any pair of Chern classes
$(c_1,c_2) \in \mathbb \mathrm{NS}(X) \times \mathbb{Z}$,
one defines the discriminant as:
\[
 \Delta(2,c_1,c_2) := \frac{1}{2}\left(c_2-\frac{1}{4}c_1^2\right).
\]

On non-algebraic surfaces, the intersection form on the Neron-Severi group
is negative semi-definite, and this can be used to show that 
if $(c_1,c_2)$ are the Chern classes of a rank-2 holomorphic vector bundle on $X$,
then $\Delta(2,c_1,c_2)\ge 0$ \cite{BL,BrinzanescuLNM}.
A natural problem is to determine whether or not the non-negativity of
$\Delta(2,c_1,c_2)$ suffices for the existence of a holomorphic
rank-2 vector bundle $E$ on $X$ with $c_1(E)=c_1$ and $c_2(E)=c_2$.
This problem was addressed for all non-algebraic surfaces in \cite{BL}. For primary
Kodaira surfaces, a complete solution 
was given in \cite{ABT}. 

Related to this problem is the notion of filtrability: a rank-2 torsion-free sheaf on 
$X$ is called {\em filtrable} if it has a rank-one coherent subsheaf
(in higher ranks one would ask for a filtration with terms in every rank).
On a non-algebraic surface $X$, filtrable rank-2 vector bundles with
Chern classes $c_1$ and $c_2$ exist if and only if 
\[
\Delta(2,c_1,c_2)\ge m(2,c_1),
\]
where
\[
m(2,c_1):=-\frac{1}{2}\mathrm{max}\left\{\left(\frac{c_1}{2}-\mu\right)^2\ :\
\mu \in \mathrm{NS}(X)\right\}
\]
\cite{BL,BrinzanescuLNM}. In the case where $X$ is a primary Kodaira surface, it is proved in \cite{ABT} that holomorphic rank-2 bundles on $X$ with
Chern classes $c_1$ and $c_2$ exist even in the
{\em non-filtrable range} 
\[0\le \Delta(2,c_1,c_2)< m(2,c_1).\]
In particular, these bundles are stable with respect to any Gauduchon
metric on $X$.

For the rest of this section, we work on primary Kodaira surfaces.
Let $X$ be a primary Kodaira surface and denote its elliptic fibration by 
$\pi : X \rightarrow B$, where $B$ is a genus 1 curve, with elliptic fibre $T$.
Recall that $X$ is isomorphic to a quotient of the form $\Theta^*/\langle \tau \rangle$, where $\Theta$ is a line bundle on $B$ of postive degree $m$, $\Theta^*$ is the complement of the zero-section in the total space of $\Theta$, and $\langle \tau \rangle$ is the multiplicative cyclic group generated by a nonzero complex number $\tau$ with $|\tau| > 1$. The fibres of $\pi:X \rightarrow B$ are then isomorphic to $T = \mathbb{C}^*/\langle \tau \rangle$, and the cohomology of $X$ is given by
\[ H^1(X,\mathbb{Z}) = \mathbb{Z}^{\oplus 3}; H^2(X,\mathbb{Z}) = \mathbb{Z}^{\oplus 4} \oplus \mathbb{Z}/m\mathbb{Z}; H^3(X,\mathbb{Z}) = \mathbb{Z}^{\oplus 3} \oplus \mathbb{Z}/m\mathbb{Z}.\]
Note in particular that these cohomology groups have no torsion if $d=1$; moreover, the torsion of $H^2(X,\mathbb{Z})$, and therefore of $\mathrm{NS}(X)$, is generated by the class of a fibre of $\pi$. We shall see in section \ref{sec:pairs} that primary Kodaira surfaces are completely determined, up to homeomorphism, by $m = \mathrm{ord}(Tors(H^2(X,\mathbb{Z}))) = \mathrm{ord}(Tors(\mathrm{NS}(X)))$. Finally, the canonical bundle of $X$ is trivial and $\chi(\mathcal{O}_X) = 0$ (see \cite {BHPV} for details).

Consider a pair $(c_1,c_2) \in \mathrm{NS}(X) \times \mathbb{Z}$
and a line bundle $\delta$ on $X$ with
$c_1(\delta) = c_1$. Let $\mathcal{M} := \mathcal{M}_{\delta,c_2}$ be the moduli space of rank-2 
{\em torsion free sheaves} on $X$ with determinant $\delta$ and second Chern class $c_2$.
Referring to \cite{Toma}, if the pair $(c_1,c_2)$ is in the non-filtrable range, the moduli space
$\mathcal{M}$ is 
compact, smooth and holomorphic symplectic,
of dimension $8\Delta(2,c_1,c_2)$. For $\Delta(2,c_1,c_2) = 1/4$, the moduli
space $\mathcal M$ is thus a compact holomorphic symplectic surface.

To describe the geometry of $\mathcal{M}$ in sections \ref{sec:2-dim} and \ref{sec:pairs}, we will use
the  notion of {\em spectral cover} associated to a rank-2 vector 
bundle \cite{Brinzanescu-Moraru1}, which we briefly recall.
Let $T^*$ denote the dual of $T$ (we fix a non-canonical identification $T^*:=\mathrm{Pic}^0(T)$).
It is known \cite{BrinzanescuLNM} that the relative Jacobian $J(X)$ of $\pi:X\rightarrow B$
is trivial, isomorphic to $B \times T^*$.
Let us denote by $T_b$ the fibre $\pi^{-1}(b)$ of $\pi$ over a point $b\in B$.
The line bundle $\delta$ then defines a morphism
$B\to T^*$, $b\mapsto \delta_b := \delta|_{T_b}\in \mathrm{Pic}^0(T)$,
which can be viewed as a section of $J(X) = B \times T^* \rightarrow B$, called $\Sigma_\delta$.
Moreover, if $\delta$ and $\delta'$ are line bundles on $X$ with $c_1(\delta) = c_1(\delta')$, then
$\delta = \lambda \otimes \delta'$ for some $\lambda \in \pi^*\mathrm{P}ic(B)$ \cite{Brinzanescu-Moraru1}.
This means in particular that $\delta_b = \delta'_b$ for all $b \in B$ as $\lambda_b = \mathcal{O}_{T_b}$ for all $b \in B$.
The morphism $B \rightarrow T^*, b \mapsto \delta_b$, therefore only depends on the Chern class $c_1$ of $\delta$ and so we denoted by
\[ \hat{c}_1: B \rightarrow T^*;\]
this provides an isomorphism between 
$\mathrm{NS}(X)/Tors(\mathrm{NS}(X))$ and the space of sections
of $J(X)$ \cite{BrinzanescuLNM}:
\[ 
\mathrm{NS}(X)/Tors(\mathrm{NS}(X)) \simeq \mathrm{Hom}(B,T^*).
\]
In particular, if $\mathrm{NS}(X)$ has no torsion and $B = T$, then $\mathrm{NS}(X) \simeq \mathrm{End}(B)$.

Let $E$ be a holomorphic rank-2 vector bundle on $X$ with determinant $\delta$ and second Chern class $c_2$.
One associates to $E$ several geometric objects \cite{Brinzanescu-Moraru1} that are crucial to our analysis. Firstly, there is the 
{\em bisection} of $J(X) \rightarrow B$ determined by $E$. Proposition 3.2 in \cite{Brinzanescu-Moraru1}
shows that, except at possibly finitely many points of $B$, the restriction
of $E$ to a fibre $T_b$ is semi-stable, and is thus $S$-equivalent
to $\lambda_b\oplus \delta_b^{-1}\lambda_b$, for 
some $\lambda_b\in T^*$. Roughly speaking, the {\em bisection}
 $\overline{C}_E$  of $E$ is the curve in $J(X)$ that intersects $T^*$ in
$\lambda_b$ and $\delta_b^{-1}\lambda_b$ for any of these $b\in B$;
note that the bisection comes equipped with a natural 
projection morphism to $B$.
The {\em spectral curve} $S_E$ of $E$ is the effective divisor in $J(X)$ given by
\[ 
S_E := \left(\sum_{i=1}^k \{ b_i \} \times T^*\right) + \overline{C}_E,
\]
where the points $b_i \in B$ correspond to the fibres $T_b$ over which the restriction of $E$
is unstable, counted with multiplicities (see \cite{Brinzanescu-Moraru1},
p. 1670). Lastly, the {\em spectral cover} of $E$
is the natural morphism $S_E\to B$.
It keeps track of  the isomorphism types of all the restrictions
of $E$ to the fibres $T_b$; in fact, when $S_E$ is smooth, one shows that $E$ is completely determined by $\overline{C}_E$ and a line bundle $L$ on $\overline{C}_E$ \cite{Brinzanescu-Moraru2}. Moreover, note that if the bisection $\overline{C}_E$ is irreducible, then so is $E$ (see proposition 3.2 in \cite{Brinzanescu-Moraru3}).

We have a natural involution $i_\delta$ on $J(X)$ 
given by $(b,\lambda) \mapsto (b,\delta_b \lambda^{-1})$.
Consider the ruled surface $\mathbb{F}_{\delta} := J(X)/i_\delta$ over $B$, defined as the 
quotient of $J(X)$ by the involution $i_\delta$, and
let $\eta:J(X) \rightarrow \mathbb{F}_{\delta}$ be the canonical map. (Note that if $\delta'$ is a line bundle on $X$ such that $c_1(\delta') = c_1(\delta)$, then $i_{\delta'} = i_\delta$, implying that $\mathbb{F}_{\delta'} = \mathbb{F}_\delta$.) By construction, the spectral curve $S_E$ 
associated to $E$ is invariant under the involution $i_\delta$ and descends to the quotient $\mathbb{F}_\delta$. 
It can therefore be considered as the pullback via $\eta$ of a divisor on $\mathbb{F}_\delta$ of the form 
\[ 
G_E := \sum_{i=1}^k f_i + A,
\]
where $f_i$ is the fibre of the ruled surface $\mathbb{F}_\delta$ over the point $b_i$ and $A$ is a section of the ruling 
such that $\eta^* A = \overline{C}_E$. The divisor $G_E$ is called the {\em graph} of the bundle $E$.
Denote by $A_0$ the image in $\mathbb{F}_{\delta}$ of the
zero section of $J(X)$, and by $f$ the class of a fibre of $\mathbb{F}_{\delta}\to B$. 
The graph $G_E$ of $E$ is then an element
of a linear system on $\mathbb{F}_{\delta}$ of the form $|A_0 + \mathfrak{b}f|$,
where $\mathfrak{b}f$ is the pullback to $\mathbb{F}_{\delta}$ of a divisor on $B$ 
of degree $c_2$ (see \cite{Brinzanescu-Moraru1}, section 3.2). This means in particular that spectral curves correspond to elements 
of linear systems on $\mathbb{F}_{\delta}$ of the form $|A_0 + \mathfrak{b}f|$.

\section{Universal and quasi-universal families}

Recall from \cite{Mukai} that, given a compact complex manifold (or scheme) $X$ 
and a connected component $\mathcal M$ of the moduli space
of simple sheaves on $X$, a {\em quasi-universal sheaf} is
a sheaf $\mathcal E$ on $X\times\mathcal M$
for which there exists a positive integer $n$ such that, 
for any sheaf $E$ corresponding to a point $y\in \mathcal M$,
$\mathcal E|_{X\times y}\cong E^{\oplus n}$.
Note that if every sheaf in $\mathcal{M}$ is locally free, then $\mathcal E$ is a vector bundle.
Mukai shows that quasi-universal families of simple
sheaves always exist if $X$ is a projective scheme (see \cite{Mukai}, Theorem A.5). 
One ingredient of his proof is Serre's Theorem B.
Here we give a non-algebraic analogue of this result in a situation
close to our purposes; the methods can, however,
be extended to a more general setup. 
We shall use the terminology of \cite{Mukai}, Appendix 2.

A coherent sheaf of rank $r$ is said to be {\em irreducible} if 
it does not contain any proper subsheaf of rank $<r$. For
$r=2$, irreducibility coincides with non-filtrability.
Such sheaves exist on any non-algebraic surface
and they are automatically stable with respect to any 
Gauduchon metric on $X$,
see for example \cite{BrinzanescuLNM}.  It is therefore not necessary to fix a 
Gauduchon metric on $X$ when considering moduli spaces of irreducible sheaves.

\begin{proposition}\label{prop:quasi-universal}
 If $X$ is a non-algebraic 2-torus or a primary Kodaira surface,
the moduli space of irreducible torsion-free sheaves with fixed determinant
on $X$ admits a quasi-universal sheaf.
\end{proposition}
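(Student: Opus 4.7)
The plan is to follow the general outline of Mukai's proof from \cite{Mukai}, Appendix 2, which shows that a quasi-universal sheaf exists once one can produce a \emph{test sheaf} $F$ on $X$ with suitable vanishing and constancy properties uniformly over the moduli space. The algebraic input in Mukai's argument is Serre's Theorem B, which here must be substituted by a direct geometric construction exploiting the elliptic fibration $\pi : X \to B$ and the spectral cover machinery recalled in Section \ref{sec:rank-2}.

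Concretely, let $\mathcal{M}$ denote a connected component of the moduli space of irreducible torsion-free sheaves of rank $2$ with fixed determinant and fixed $c_2$ on $X$. By \cite{Toma} this space is compact and smooth, and every member is stable (hence simple). The goal is to produce a coherent sheaf $F$ on $X$ such that, for every $E \in \mathcal{M}$: first, $\mathrm{Ext}^i(F, E) = 0$ for all $i \geq 1$; second, the number $h := \dim \mathrm{Hom}(F, E)$ is a positive constant independent of $E$; and third, the canonical evaluation morphism $\mathrm{Hom}(F, E) \otimes F \to E$ is surjective. Given such an $F$, a standard Quot-scheme argument presents $\mathcal{M}$ as a $GL_h$-quotient of a smooth open subset of $\mathrm{Quot}(F^{\oplus h})$, and the tautological quotient on this Quot scheme descends to a sheaf $\mathcal{E}$ on $X \times \mathcal{M}$ whose restriction to $X \times \{y\}$ is $E_y^{\oplus n}$ for a suitable constant $n$.

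For the construction of $F$, I would exploit the triviality of $K_X$ (which turns Serre duality into an involution) together with the explicit structure of irreducible rank-$2$ sheaves furnished by the spectral cover: the restriction of any $E \in \mathcal{M}$ to a general fibre $T_b$ is a semistable rank-$2$ vector bundle of controlled slope. Taking $F$ to be a direct sum of line bundles supported on a generic collection of fibres, or dually a sum of skyscraper sheaves at generic points of $X$, one can compute the relevant $\mathrm{Ext}$ groups fibrewise by classical elliptic-curve theory. Compactness of $\mathcal{M}$, combined with semicontinuity of $\mathrm{Ext}$, then produces a non-empty open set of parameters (choices of fibres, respectively points) for which the three required conditions hold simultaneously for every $E \in \mathcal{M}$.

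The main technical obstacle is precisely this uniformity: one must show that the locus where a candidate $F$ fails one of the three conditions is a proper closed subset of the parameter space for $F$. Compactness of $\mathcal{M}$ together with semicontinuity of $\mathrm{Ext}$ and the spectral-cover description of the restrictions $E|_{T_b}$ provide the needed control, yielding a single test sheaf $F$ that works throughout $\mathcal{M}$ and hence a quasi-universal sheaf on $X \times \mathcal{M}$.
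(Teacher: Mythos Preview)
Your proposal has a concrete gap in the choice of test sheaf. Both candidates you name --- a direct sum of line bundles supported on fibres $T_b$, or a sum of skyscraper sheaves --- are torsion sheaves with support of codimension $\geq 1$. For any torsion-free $E$ on $X$ and any such $F$, every morphism $F \to E$ has torsion image, hence is zero; thus $\mathrm{Hom}(F,E)=0$ and the evaluation map is identically zero rather than surjective. No amount of genericity in the choice of fibres or points rescues this, so the Quot-scheme presentation you describe never gets off the ground with these $F$. You would need a locally free (or at least torsion-free with full support) test sheaf, and producing one with the required uniform $\mathrm{Ext}$-vanishing on a non-algebraic surface is exactly the difficulty that Serre's Theorem~B handles in the projective case and that your sketch does not address.

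The paper's proof sidesteps the test-sheaf construction entirely and is considerably simpler than the route you propose. The observation is that, since $K_X$ is trivial and every $E\in\mathcal{M}$ is stable of positive degree-zero slope (after normalising), one has $h^0(X,E)=h^2(X,E)=0$; Riemann--Roch then forces $h^1(X,E)=-\chi(E)$ to be a \emph{positive constant} over $\mathcal{M}$ (positivity uses $c_1^2\le 0$ and $\Delta>0$). Hence, over each open set $U_i\subset\mathcal{M}$ carrying a local universal family $\mathcal{E}_i$, the higher direct image $V_i:=R^1\pi_{i*}\mathcal{E}_i$ is a vector bundle of this fixed rank. The gluing isomorphisms $f_{ij}$ between the $\mathcal{E}_i$ induce isomorphisms $\bar f_{ij}$ between the $V_i$, and Mukai's trick of gluing the $\mathcal{E}_i\otimes\pi_i^*V_i^\vee$ via $f_{ij}\otimes(\bar f_{ij}^{-1})^\vee$ kills the $\mathcal{O}^*$-ambiguity and produces a global quasi-universal sheaf. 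No spectral-cover input, no Quot scheme, and no search for a test sheaf is needed: the role of Serre's Theorem~B is played by the automatic vanishing $h^0=h^2=0$ coming from stability and $K_X\cong\mathcal{O}_X$.
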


\proof
Let  $\mathcal{M}$ be
the moduli space of irreducible torsion-free sheaves on $X$ of rank $r>1$, fixed
determinant line bundle $\delta$ and second Chern class $c_2$. 
Let $c_1$ be the first Chern class of $\delta$.
Then $\mathcal{M}$ is smooth of dimension 
$2r^2 \Delta(r, c_1, c_2) -(r^2-1)\chi(\mathcal{O}_X)= 
2r^2 \Delta(r, c_1, c_2)$, where $ \Delta(r, c_1, c_2) := \frac{1}{r}(c_2 - \frac{r-1}{2r}c^2_1)$ (see \cite{Toma}, Corollary 3.4). Assume this dimension to be positive,
otherwise the existence of a universal family is trivial. In this
case, for any sheaf $F$ corresponding to 
a point of $\mathcal{M}$, one has $h^0(X, F)=h^2(X,
F)=0$ by stability of $F$, implying that
$h^1(X, F)=r(\Delta(r, c_1, c_2)- c_1^2/(2r^2)) \geq r \Delta(r,c_1, c_2) > 0$.
Consider an open covering $(U_i)_i$ of $\mathcal{M}$ such that a universal family  $\mathcal{E}_i$ 
exists over each $X \times U_i$ 
 and denote by $\pi _i :  X \times U_i \rightarrow U_i$ the projection. 
 Set $V_i := R^1 _* \pi _i (\mathcal{E}_i)$; then these are vector bundles of 
 rank $r(\Delta(r, c_1, c_2)- c_1^2/(2r^2))$ on $U_i$. Now, on $U_{ij}:=U_i \cap U_j$, we
 have isomorphisms 
 $f_{ij}:\mathcal{E}_i |_{X \times U_{ij}} \rightarrow \mathcal{E}_j |_{X \times
 U_{ij}}$ and 
 $\bar{f}_{ij} := R^1 _* \pi _{ij} (f_{ij}): V_i |_{U_{ij}} \rightarrow V_j |_{U_{ij}}$, where 
 $\pi _{ij} :  X \times U_{ij} \rightarrow U_{ij}$  are the projections.
  As in the
 projective case, we can use the
 $f_{ij} \otimes \pi _{ij} ^* ((\bar{f}^{-1}_{ij})^{\vee}): 
 \mathcal{E}_i \otimes \pi _{i} ^* V_i^{\vee} |_{X \times U_{ij}} \rightarrow 
 \mathcal{E}_j \otimes \pi _j ^* V_j^{\vee} |_{X \times U_{ij}}$ to glue the sheaves
 $\mathcal{E}_i \otimes \pi _i ^* V_i^{\vee}$ into a quasi-universal family on 
 $X \times \mathcal{M}$ (for details, see \cite{Mukai}, Theorem A.5).
\endproof 
 
 Note that the condition that the determinant be fixed is not
 essential to the proof of Proposition \ref{prop:quasi-universal}, and we only introduced it to
 be consistent with the framework we place
 ourselves in.
 
 \medskip
 
 As in the projective case, 
 universal families will exist if the greatest common divisor 
 of the numbers $\chi(F \otimes N)$ is one, where $F$ is a sheaf 
 corresponding to a point of
 $\mathcal{M}$ and $N$ runs through all locally free sheaves
 of rank one on $X$ (for details of the projective case, see \cite{Mukai}, Appendix 2). 
 Moreover, as the next example illustrates, non-algebraic surfaces can admit universal families.

 \begin{example}
\label{ex:universal}
 Take $X$ to be a primary Kodaira surface such that $\mathrm{NS}(X)$ has no torsion and is generated by two elements
 $c_1$, $e_1$ such that $c_1^2=-6$, $e_1^2= -2$ and $c_1\cdot e_1= 1$. To see that such a
 surface exists, take $X$ with base and fiber equal to 
 an elliptic curve $E$ with periods $1$ and $(-1+\sqrt{-11})/2$, and assume that it is obtained from a line bundle $\Theta$ on $E$ of degree 1 thus ensuring that $\mathrm{NS}(X)$ has no torsion (see section \ref{sec:rank-2}). Then use the
 isomorphism  
 $\mathrm{NS}(X)=\mathrm{End}(E)$ and compute the last group as in \cite{Ha}, IV.4.19. Let now
 $\delta$ and 
 $\varepsilon$ be line bundles with Chern classes $c_1$ and $e_1$ respectively. Let $\mathcal{M}$ be the
 moduli space  of rank-2 torsion free sheaves  on $X$ with
 determinant $\delta$
 and discriminant $\Delta(2,c_1,c_2)=1/4$. Referring to section \ref{sec:rank-2}, given that $0 \leq \Delta(2,c_1,c_2)  = 1/4 < 3/4 = m(2,c_1)$,
 the pair $(c_1,c_2)$ is in the non-filtrable range, implying that $\mathcal{M}$ is a smooth compact complex surface. 
 Note that every member of $\mathcal M$ is non-filtrable and therefore stable with respect to any Gauduchon metric on $X$. 
 Moreover, since $\chi(F)= -2 $ and $\chi(F \otimes \varepsilon)= -3$ for all members $F$ of $\mathcal{M}$, 
 the moduli space $\mathcal{M}$ is fine.
\end{example}

Finally, in some cases, universal families exist on an \'etale cover.
 
\begin{proposition}
\label{prop:universal sheaf}
Let $X$ be a compact complex manifold endowed with a Gauduchon metric,
and assume that a moduli space $\mathcal M$ of stable sheaves of rank $r$
with fixed determinant on $X$ is a complex torus. Then there exist an \'etale 
(connected) cover $Y\to \mathcal M$
and a universal sheaf $\mathcal E$ on $X\times Y$.
\end{proposition}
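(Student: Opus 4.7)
The plan is to first produce a quasi-universal sheaf $\mathcal E$ on $X\times\mathcal M$, by Proposition \ref{prop:quasi-universal} or a mild extension of its construction, with the property that $\mathcal E|_{X\times\{y\}}\cong E_y^{\oplus n}$ for some fixed positive integer $n$. On a cover $(U_i)$ of $\mathcal M$ over which local universal sheaves $\mathcal E_i$ exist, the line bundles $L_{ij}$ defined by $\mathcal E_j\cong\mathcal E_i\otimes p_{ij}^{*}L_{ij}$ satisfy a cocycle condition only up to scalars, producing an obstruction class $\alpha\in H^2(\mathcal M,\mathcal O_{\mathcal M}^{*})$; the existence of the rank-$n$ twisted lift $\mathcal E$ forces $n\alpha=0$.

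The core step would then be to trivialise $\alpha$ on a suitable isogeny of $\mathcal M$. Writing $\mathcal M=V/\Lambda$ with $\dim_{\mathbb C}\mathcal M=g$, the group $H^3(\mathcal M,\mathbb Z)\cong\mathbb Z^{\binom{2g}{3}}$ is torsion-free, so the exponential sheaf sequence
\[
H^2(\mathcal M,\mathcal O_{\mathcal M})\xrightarrow{\pi}H^2(\mathcal M,\mathcal O_{\mathcal M}^{*})\to H^3(\mathcal M,\mathbb Z)
\]
yields $\alpha=\pi(\beta)$ for some $\beta\in H^2(\mathcal M,\mathcal O_{\mathcal M})$, and $n\beta$ lies in the image of $H^2(\mathcal M,\mathbb Z)\to H^2(\mathcal M,\mathcal O_{\mathcal M})$. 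Now take $\phi:=[n]:\mathcal M\to\mathcal M$, multiplication by $n$, which is a finite \'etale surjective self-map of the connected torus $\mathcal M$. Its differential at the origin is $n\cdot\mathrm{id}$, so pullback acts as multiplication by $n$ on $H^1(\mathcal M,\mathcal O_{\mathcal M})$ (represented by translation-invariant $(0,1)$-forms) and therefore as multiplication by $n^2$ on $H^2(\mathcal M,\mathcal O_{\mathcal M})=\wedge^2 H^1(\mathcal M,\mathcal O_{\mathcal M})$. Hence $\phi^{*}\beta=n^2\beta=n\cdot(n\beta)$ is integral, so $\phi^{*}\alpha=\pi(\phi^{*}\beta)=0$.

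Setting $Y:=\mathcal M$ endowed with the \'etale map $\phi$ to $\mathcal M$, the vanishing of the pulled-back obstruction permits the sheaves $(\mathrm{id}_X\times\phi)^{*}\mathcal E_i$ to be reglued on a common refinement into a genuine universal sheaf $\mathcal E_{\mathrm{univ}}$ on $X\times Y$; equivalently, $(\mathrm{id}_X\times\phi)^{*}\mathcal E\cong\mathcal E_{\mathrm{univ}}\otimes p_Y^{*}V$ for a rank-$n$ vector bundle $V$ on $Y$. Connectedness of $Y$ is automatic, since $[n]$ is a surjective \'etale self-map of the connected torus $\mathcal M$. The main obstacle I foresee is the passage from the cohomological vanishing $\phi^{*}\alpha=0$ to an explicit sheaf-theoretic gluing --- adjusting the scalar-valued defect on a common refinement of $(\phi^{-1}U_i)$ so that the resulting cocycle of line bundles becomes exact --- but this is standard \v{C}ech/gerbe bookkeeping in the style of \cite{Mukai}, Appendix 2.
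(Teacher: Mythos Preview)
Your strategy is essentially the same as the paper's: produce the obstruction class $\alpha\in H^2(\mathcal M,\mathcal O^*_{\mathcal M})$ to gluing the local universal sheaves, show it is torsion, kill it on an \'etale cover of the torus, and untwist. Two points of comparison are worth noting.

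First, your route to torsion via a quasi-universal sheaf is a detour with a genuine gap in the stated generality: Proposition~\ref{prop:quasi-universal} is proved only for $X$ a non-algebraic $2$-torus or a primary Kodaira surface, and its proof uses $\chi(\mathcal O_X)=0$ together with the vanishing $h^0(F)=h^2(F)=0$ to force $h^1(F)>0$; for an arbitrary compact complex manifold $X$ as in Proposition~\ref{prop:universal sheaf} no ``mild extension'' is apparent. The paper avoids this entirely: the local $\mathcal E_i$ themselves glue to an $\widetilde\alpha$-twisted sheaf of rank $r$ on $X\times\mathcal M$, so $\widetilde\alpha$ comes from $H^2(X\times\mathcal M,\mathbb Z/r\mathbb Z)$ and is $r$-torsion; injectivity of pull-back along $X\times\mathcal M\to\mathcal M$ then gives $r\alpha=0$. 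You can repair your argument by replacing your $n$ with $r$ and deleting the quasi-universal step.

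Second, your explicit cover $[n]\colon\mathcal M\to\mathcal M$ and the exponential-sequence computation showing $[n]^*\alpha=0$ (using that $H^3(\mathcal M,\mathbb Z)$ is torsion-free and that $[n]^*$ acts as $n^2$ on $H^{0,2}$) actually supplies more detail than the paper, which simply asserts that one may ``pass to an \'etale cover $f:Y\to\mathcal M$ such that the pull-back of $\alpha$ vanishes''. This is a genuine addition. The final untwisting step, which you flag as the main obstacle, is handled in the paper by citing C\u ald\u araru's Proposition~1.2.10 on pull-backs of twisted sheaves; your ``standard \v Cech/gerbe bookkeeping'' amounts to the same thing.
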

 
\proof
On an open cover $(U_i)_i$ of
$\mathcal M$, there are universal bundles $\mathcal E_i\to X\times U_i$.
By the universality, we obtain isomorphisms
$f_{ij}:\mathcal E_i|_{X\times U_{ij}}\to \mathcal E_j|_{X\times U_{ij}}$.
Over each point $y\in U_{ijk}:=U_i \cap U_j \cap U_k$, the restriction
$\varphi_{ijk}:=(f_{ij}\circ f_{jk}\circ f_{ki})|_{X\times y}$
is a non-trivial endomorphism of $\mathcal E_i|_{X\times y}$
and thus a multiple of the identity, by stability of $\mathcal E_i|_{X\times y}$. 
Varying $y$, we obtain a cocycle $\alpha\in H^2(\mathcal M, \mathcal O^*_{\mathcal M})$.
Let  $\widetilde{\alpha}$ be the pull-back of $\alpha$ via 
the projection $X\times\mathcal M\to \mathcal M$.
The class $\widetilde{\alpha}$ is the obstruction
to glueing the local universal sheaves $\mathcal E_i$ to
a globally defined universal sheaf; note however that the
$\mathcal E_i$ can be glued together to a globally defined universal $\widetilde{\alpha}$-sheaf.
On the other hand, the class $\widetilde{\alpha}$ is a $r$-torsion element, 
as it comes from $H^2(X\times \mathcal M, \mathbb Z/r\mathbb Z)$
via the natural morphism induced by $\mathbb Z/r\mathbb Z\hookrightarrow
\mathcal O_{X\times \mathcal M}^*$ (see \cite{Caldararu-TEZA}, p. 9).
Since the pull-back 
$H^2(\mathcal M,\mathcal O^*_{\mathcal M})\to
H^2(X\times \mathcal M,\mathcal O^*_{X\times \mathcal M})$
is injective, it follows that $\alpha$ is $r$-torsion too.
We pass to an \'etale cover $f:Y\to\mathcal M$ such that
the pull-back of $\alpha$ vanishes. By Proposition 1.2.10 in \cite{Caldararu-TEZA}, the pullback to $X \times Y$ of the universal 
$\widetilde{\alpha}$-sheaf on $X\times \mathcal M$ is an $(\mathrm{id}_X \times f)^*\widetilde{\alpha}$-sheaf
on $X\times Y$, implying in particular that it is a (untwisted) universal sheaf on $X \times Y$ since $(\mathrm{id}_X \times f)^*\widetilde{\alpha}$ vanishes.
\endproof

Again, the hypothesis that the determinant be fixed has not been
used in the proof.

\section{Two-dimensional moduli spaces}
\label{sec:2-dim}

We use the notation of the previous sections.
In the sequel, we work under the assumptions that 
$$
\Delta(2,c_1,c_2) = 1/4,\ m(2,c_1)>1/4,
$$ 
meaning that the moduli space $\mathcal M$ is
two-dimensional and we are in the
non-filtrable range. Note that
\[ -c_1^2/2 = 1 - 2c_2, \]
implying, in particular, that $-c_1^2/2$ is odd. 
In addition, by Lemma 3.8 in \cite{Brinzanescu-Moraru1},
the ruled surface $\mathbb{F}_{\delta}$ has invariant $e = -1$.

\begin{lemma}
Every sheaf in $\mathcal{M}$ is locally free. 
\end{lemma}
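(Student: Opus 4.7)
The plan is to argue by contradiction via double dualization and the Bogomolov-type inequality for rank-2 holomorphic bundles on non-algebraic surfaces that was recalled in Section~\ref{sec:rank-2}.

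Suppose some $F\in\mathcal{M}$ is not locally free. Since $F$ is torsion-free of rank $2$ on the smooth surface $X$, its double dual $F^{**}$ is reflexive of rank $2$, hence locally free (reflexive sheaves on smooth complex surfaces are locally free). The canonical inclusion $F\hookrightarrow F^{**}$ is an isomorphism away from the non-locally-free locus of $F$, which has codimension at least $2$, i.e.\ is a finite set of points. Hence $T:=F^{**}/F$ is a nonzero torsion sheaf of $0$-dimensional support and finite length $\ell\geq 1$.

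From the short exact sequence $0\to F\to F^{**}\to T\to 0$, Chern class computations give $c_1(F^{**})=c_1(F)=c_1$ and $c_2(F^{**})=c_2(F)-\ell=c_2-\ell$. Hence
\[
\Delta\!\bigl(2,c_1(F^{**}),c_2(F^{**})\bigr)=\tfrac{1}{2}\!\left(c_2-\ell-\tfrac{c_1^2}{4}\right)=\Delta(2,c_1,c_2)-\tfrac{\ell}{2}=\tfrac{1}{4}-\tfrac{\ell}{2}\leq -\tfrac{1}{4}<0.
\]
On the other hand, the primary Kodaira surface $X$ is non-algebraic (its first Betti number equals $3$), so by the Bogomolov-type inequality recalled at the beginning of Section~\ref{sec:rank-2} from \cite{BL,BrinzanescuLNM}, the Chern classes of any rank-2 holomorphic vector bundle on $X$ must satisfy $\Delta\geq 0$. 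Since $F^{**}$ is such a bundle, we reach a contradiction, and conclude that $F$ was locally free to begin with.

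There is no real obstacle here beyond checking that the double dual construction lands inside the range where the non-algebraic Bogomolov inequality applies; the key observation is simply that decreasing $c_2$ by a positive integer while keeping $c_1$ fixed pushes $\Delta$ strictly below zero, which is forbidden on a non-algebraic surface. Note that no use is made of stability, irreducibility, or of the specific value of $m(2,c_1)$ in this argument; only the numerical hypothesis $\Delta(2,c_1,c_2)=1/4$ matters.
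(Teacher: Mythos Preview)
Your argument is correct and is essentially the same as the paper's: both pass to the double dual, use that the cokernel has finite length $\ell$ (the paper's $m$), compute $\Delta(F^{**})=\Delta(F)-\ell/2=1/4-\ell/2$, and invoke $\Delta\geq 0$ on a non-algebraic surface to force $\ell=0$. The only difference is cosmetic: the paper phrases it as a direct inequality $0\leq m\leq 1/2$ rather than as a contradiction.
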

\begin{proof}
Let $\mathcal{E} \in \mathcal{M}$. Then, if the skyscraper sheaf 
$\mathcal{E}^{\ast\ast}/\mathcal{E}$ is supported on $m$ points (counting multiplicity),
we have
\[ 8 \Delta(\mathcal{E}^{\ast\ast}) = 8\Delta(\mathcal{E}) - 4m = 2 - 4m \geq 0.\]
Consequently, $0 \leq m \leq 1/2$, implying that $m = 0$ and $\mathcal{E}$ is
locally free.
\end{proof}

\begin{lemma}
Let $E \in \mathcal{M}$. Then $E$ is semi-stable on every fibre of $\pi$. 
In particular, the spectral curve of any vector bundle in 
$\mathcal{M}$ coincides with the associated bisection.
\end{lemma}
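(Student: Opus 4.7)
The plan is to argue by contradiction using an elementary transformation along the destabilizing line bundle on the bad fibre. Suppose some $E \in \mathcal M$ is unstable on a fibre $T_{b_0}$, and let $L_0 \subset E|_{T_{b_0}}$ be its destabilizing sub-line-bundle, of degree $d$.

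The key preliminary observation is that $c_1 \cdot [T] = 0$ for every $c_1 \in \mathrm{NS}(X)$: since the torsion subgroup of $\mathrm{NS}(X)$ is generated by the class $[T]$ of a fibre (section \ref{sec:rank-2}), the class $[T]$ is numerically trivial and hence pairs trivially with any other class. In particular $\deg(E|_{T_{b_0}}) = 0$, so the semistable slope on the fibre is $0$ and destabilization forces the integer $d$ to satisfy $d \geq 1$.

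The second step is to form the elementary transformation of $E$ along $L_0$,
\[ 0 \to E' \to E \to i_*(E|_{T_{b_0}}/L_0) \to 0, \]
where $i \colon T_{b_0} \hookrightarrow X$. Since $E|_{T_{b_0}}/L_0$ is a line bundle on the smooth Cartier divisor $T_{b_0}$, the kernel $E'$ is a rank-$2$ holomorphic vector bundle on $X$. A short Chern-class computation --- using $[T]^2 = 0 = c_1 \cdot [T]$ and the triviality of $\mathrm{Td}(X)$ (which follows from $K_X = \mathcal O_X$ and $\chi(\mathcal O_X) = 0$) --- yields $c_1(E') = c_1 - [T]$ and $c_2(E') = c_2 - d$, so that
\[ \Delta(E') = \Delta(E) - d/2 = 1/4 - d/2 \leq -1/4. \]

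This contradicts the non-negativity of the discriminant of a rank-$2$ holomorphic vector bundle on a non-algebraic surface, as recalled at the beginning of section \ref{sec:rank-2}. Hence no $E \in \mathcal M$ can be unstable on any fibre of $\pi$; the \emph{in particular} clause follows immediately, since the integer $k$ in the decomposition $S_E = \sum_{i=1}^{k}\{b_i\} \times T^* + \overline{C}_E$ is then forced to be zero. The step that requires the most care is the preliminary vanishing $c_1 \cdot [T] = 0$, but once it is in place the contradiction is a one-line Bogomolov-type calculation and I do not anticipate any deeper obstacle.
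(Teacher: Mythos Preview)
Your argument is correct and is essentially the paper's own proof: both proceed by contradiction, perform the elementary modification of $E$ along the destabilizing quotient on the bad fibre, use the numerical triviality of the fibre class to see that only $c_2$ drops (by $d$, respectively $k$), and conclude that the discriminant becomes $1/4 - d/2 < 0$, contradicting the non-negativity of $\Delta$ on a non-algebraic surface. The only cosmetic difference is that the paper quotes the Chern-class formulas for the ``allowable'' elementary modification from \cite{Brinzanescu-Moraru2}, whereas you compute them directly; your invocation of $\mathrm{Td}(X)$ is slightly beside the point (what you actually use is $[T]^2=0$, i.e.\ triviality of the normal-bundle degree), but the computation is right.
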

\begin{proof}
We prove the lemma by contradiction.
Let us assume, on the contrary, that $E$ is not semi-stable on the fibre 
$\pi^{-1}(x_0) := T_{0}$. Its restriction to $T_{0}$ is therefore of the form
$\lambda \oplus (\lambda^\ast \otimes \delta)$ with 
$\lambda \in {\rm Pic}^{-k}(T_{0})$, $k > 0$.
In addition, up to a multiple of the identity, there is a unique
surjection $E|_{T_{0}} \rightarrow \lambda$, which defines a canonical elementary modification of $E$ that we
denote $\bar{E}$. This elementary modification is called allowable and it has the following invariants (see section 5.1 of \cite{Brinzanescu-Moraru2}): 
\[c_1(\bar{E}) = c_1(E) + c_1(\mathcal{O}_X(-T_{0}))\] 
and 
\[ c_2(\bar{E}) = c_2(E) + c_1(E) \cdot c_1(\mathcal{O}_X(-T_{0})) + c_1(\lambda). \]
Since the fibres of $\pi$ generate the torsion part of $\mathrm{NS}(X)$
as mentioned above, we have $c_1(E) \cdot c_1(\mathcal{O}_X(-T_{0}))=0$ 
and $c_1^2(\mathcal{O}_X(-T_{0})) = 0$.
Consequently,
\[ c_2(\bar{E}) = c_2(E) + c_1(\lambda)  = c_2(E) - k\]
and
\[ \Delta(\bar{E}) = \Delta(E) - \frac{k}{2} = \frac{1}{4} - \frac{k}{2} < 0\]
because $k > 0$. However, a locally free sheaf on $X$ cannot have a negative 
discriminant, leading to a contraction.
\end{proof}

To describe the elements of $\mathcal{M}$ we therefore have to first classify
their possible spectral curves,
which correspond to elements of a linear system in $\mathbb{F}_\delta$ of the form 
$|A_0 + \mathfrak{b}f|$
with $\mathfrak{b}f$ the pullback of a divisor $\mathfrak{b}$ on $B$ of degree $c_2$;
we begin by describing such linear systems. Note that different choices of
$\mathfrak{b}$ give rise to different bisections of $J(X) \rightarrow B$. 
We have the following:

\begin{lemma}
Let $\mathfrak{b}$ be a divisor of degree $c_2$ on $B$.
We denote by $\mathfrak{b}f$ its pullback to $\mathbb{F}_{\delta}$.
Then the linear system $|A_0 + \mathfrak{b}f|$ consists of a single element.
\end{lemma}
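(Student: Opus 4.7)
The plan is to push forward along the ruling $\pi\colon\mathbb{F}_\delta\to B$ and identify the space of sections with the cohomology of a rank-$2$ vector bundle on the elliptic curve $B$. Since $A_0$ is a section of $\pi$ (so $A_0\cdot f = 1$), the restriction $\mathcal{O}(A_0)|_f$ is $\mathcal{O}_{\mathbb{P}^1}(1)$ on every fibre, so $\mathcal{W}:=\pi_*\mathcal{O}(A_0)$ is a rank-$2$ vector bundle on $B$ with $R^1\pi_*\mathcal{O}(A_0)=0$. The projection formula gives
\[
H^0(\mathbb{F}_\delta, A_0+\mathfrak{b}f) \cong H^0(B,\mathcal{W}\otimes\mathcal{O}_B(\mathfrak{b})),
\]
so the lemma reduces to showing that this space is one-dimensional.

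First I would compute $\deg\mathcal{W}$ via $A_0^2$. The double cover $\eta\colon J(X)\to\mathbb{F}_\delta$ satisfies $\eta^*A_0 = 0_X + \Sigma_\delta$, where $0_X$ is the zero section and $\Sigma_\delta$ is the graph of $\hat{c}_1\colon B\to T^*$ in $J(X)=B\times T^*$. Both $0_X$ and $\Sigma_\delta$ are sections of the trivial fibration $J(X)\to B$, so $0_X^2 = \Sigma_\delta^2 = 0$, while $0_X\cdot\Sigma_\delta$ counts the zeros of $\hat{c}_1$, namely $\deg\hat{c}_1 = -c_1^2/2$. The projection formula for the degree-$2$ cover $\eta$ then yields $2A_0^2 = (\eta^*A_0)^2 = -c_1^2 = 2-4c_2$, so $A_0^2 = 1-2c_2$. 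Pushing forward
\[
0\to\mathcal{O}_{\mathbb{F}_\delta}\to\mathcal{O}(A_0)\to\mathcal{O}_{A_0}(A_0)\to 0,
\]
and using that $(\pi|_{A_0})_*\mathcal{O}_{A_0}(A_0)$ is a line bundle of degree $A_0^2$ on $B\cong A_0$, gives $\deg\mathcal{W}=1-2c_2$; consequently $\deg(\mathcal{W}\otimes\mathcal{O}_B(\mathfrak{b})) = 1$ independently of $c_2$.

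Next I would check that $\mathcal{W}$ is indecomposable: a splitting $\mathcal{W}\cong L_1\oplus L_2$ with $\deg L_1\geq\deg L_2$ would produce a section of $\mathbb{F}_\delta=\mathbb{P}(\mathcal{W})$ of self-intersection $\deg L_2-\deg L_1\leq 0$, contradicting the hypothesis $e=-1$ (which forces the minimal section self-intersection to be $+1$). Tensoring by a line bundle preserves indecomposability, so $\mathcal{W}\otimes\mathcal{O}_B(\mathfrak{b})$ is an indecomposable rank-$2$ bundle of degree $1$ on an elliptic curve; by Atiyah's classification (equivalently, by Serre duality combined with stability of such a bundle) one concludes $h^0=1$.

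The main technical step is the intersection-theoretic identification $A_0^2 = -c_1^2/2 = 1-2c_2$ via the double cover $\eta$; once the degree of $\mathcal{W}\otimes\mathcal{O}_B(\mathfrak{b})$ is pinned down to $1$, the conclusion is a standard consequence of the theory of vector bundles on elliptic curves.
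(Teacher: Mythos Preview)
Your proof is correct and follows essentially the same strategy as the paper: push forward to $B$, identify $H^0(\mathbb{F}_\delta,A_0+\mathfrak{b}f)$ with $H^0$ of a rank-$2$ bundle on $B$, and use $e=-1$ to pin down that bundle as an indecomposable bundle of degree $1$. In fact your $\mathcal{W}=\pi_*\mathcal{O}(A_0)$ coincides with the paper's $V_\delta={p_1}_*\mathcal{O}_{J(X)}(B_0+\Sigma_\delta)$. The only cosmetic differences are that the paper quotes $\deg V_\delta=-c_1^2/2$ from \cite{Brinzanescu-Moraru1} and writes $V_\delta$ explicitly as $\mathfrak{a}\otimes W$ with $W$ the Atiyah extension, whereas you recover the degree via the intersection computation $A_0^2=(\eta^*A_0)^2/2$ on $J(X)$ and invoke Atiyah's classification at the end; your version is a bit more self-contained.
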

\begin{proof}
In this proof, we use the notation of section 3.2 in 
\cite{Brinzanescu-Moraru1}.
Let $p_1: J(X) = B \times T^* \rightarrow B$ denote the projection onto the first factor.
Also, denote by $B_0$ the zero section of $J(X)$ and by $\Sigma_\delta$ 
the section of $J(X)$ corresponding to $\delta$.
Furthermore, let $V_\delta = {p_1}_\ast(\mathcal{O}_{J(X)}(B_0 + \Sigma_\delta))$.
Then $\deg V_\delta = -c_1^2/2$ 
and $\mathbb{F}_{\delta} = \mathbb{P}(V_\delta)$
(see \cite{Brinzanescu-Moraru1}, p. 1673).
Moreover, we have
\[H^0(\mathbb{F}_{\delta},\mathcal{O}_{\mathbb{F}_{\delta}}(A_0 + \mathfrak{b}f)) 
= H^0(B, \mathcal{O}_B(\mathfrak{b}) \otimes V_\delta). \]

Note that since $\mathbb{F}_{\delta}$ has invariant $e = -1$, we have
$V_\delta = \mathfrak a \otimes W$ with $W$ a non-trivial extension of 
line bundles on $B$ of the form
\[ 0 \rightarrow \mathcal{O} \rightarrow W \rightarrow \mathcal{O}(p) \rightarrow 0,\]
for some point $p$ on $B$,
and $\mathfrak a$ the line bundle of maximal degree mapping into $V_\delta$.
Put $\deg \mathfrak a = d$. Then
\[ e = 2d + c_1^2/2,\] 
implying that
\[ -1 = 2d + c_1^2/2 = 2d - (1 - 2c_2).\]

Consequently, $V_\delta $ is given by the non-trivial extension
\[ 0 \rightarrow \mathfrak a \rightarrow V_\delta \rightarrow \mathfrak a(p) \rightarrow 0, \]
with $\mathfrak a$ such that $\deg \mathfrak a = -c_2$.
In particular, this implies that for any divisor $\mathfrak{b}$ on $B$ of degree
$c_2$, we have
\[ H^0(B,\mathcal{O}_B(\mathfrak{b}) \otimes V_\delta )  = \mathbb{C},\]
proving that $|A_0 + \mathfrak{b}f|$ consists of a single point.
\end{proof}

Hence, the set of all possible graphs or, equivalently, 
spectral curves of bundles in
$\mathcal{M}$ is parametrised by ${\rm Pic}^{c_2}(B)$. 

\begin{corollary}
\label{cor:smooth  bisection}
The spectral curve of any vector bundle in $\mathcal{M}$ is smooth and irreducible. 
\end{corollary}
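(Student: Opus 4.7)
The plan is to handle irreducibility first, then smoothness via an arithmetic genus computation, with the only delicate point being a single residual case. For irreducibility, if the bisection $\overline{C}_E$ were reducible, it would split as a sum $\Sigma_{\lambda_1}+\Sigma_{\lambda_2}$ of two sections of $J(X)\to B$, each corresponding to a line bundle $\lambda_i$ on $X$ with $c_1(\lambda_1)+c_1(\lambda_2)=c_1$. By the spectral correspondence of Brinzanescu--Moraru (\cite{Brinzanescu-Moraru2,Brinzanescu-Moraru3}), such a decomposition forces $\lambda_1$ to embed in $E$ as a sub-line-bundle, contradicting the hypothesis that $E$ is non-filtrable.

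For smoothness, I would first compute the arithmetic genus of $\overline{C}_E$ in the abelian surface $J(X)$. Writing $\overline{C}_E=\eta^*A$ with $A\in|A_0+\mathfrak{b}f|$, the identifications $\eta^*A_0=B_0+\Sigma_\delta$ and $B_0\cdot\Sigma_\delta=\deg\hat{c}_1=-c_1^2/2=1-2c_2$, together with $B_0^2=\Sigma_\delta^2=0$ (both sections being elliptic curves in $J(X)$, so adjunction applies), give $A_0^2=1-2c_2$. Hence $\overline{C}_E^2=2A^2=2(A_0^2+2c_2)=2$, and adjunction on $J(X)$, whose canonical bundle is trivial, yields $p_a(\overline{C}_E)=2$.

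Next, I would locate the possible singularities. Away from the fixed locus $\mathcal{R}$ of $i_\delta$, the double cover $\eta$ is \'etale and $A$ is smooth as a section of the ruled surface $\mathbb{F}_\delta$, so any singular point of $\overline{C}_E$ must lie over a point of $\mathcal{R}$ and corresponds to a tangency between $A$ and the branch curve $\eta(\mathcal{R})$. A projection-formula computation gives $A\cdot\eta(\mathcal{R})=\overline{C}_E\cdot\mathcal{R}=2$, combining $B_0\cdot\mathcal{R}=\Sigma_\delta\cdot\mathcal{R}=1-2c_2$ with $F\cdot\mathcal{R}=4$. If the two intersection points are distinct and transverse, Riemann--Hurwitz for the double cover $\overline{C}_E\to B$ forces $g(\overline{C}_E)=2=p_a$, proving smoothness.

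The main obstacle is excluding the residual scenario in which $A$ meets $\eta(\mathcal{R})$ at a single tangent point, producing a node on $\overline{C}_E$ with $p_g=1$. I would read such a tangency as the condition that $E|_{T_{b_0}}\cong\lambda\oplus\lambda$ is polystable and split for some $b_0\in B$, as opposed to the non-split Atiyah extension that arises in the transverse case. Combined with the uniqueness of $A$ in the one-element linear system $|A_0+\mathfrak{b}f|$ and the smoothness of $\mathcal{M}$, the plan is to use this splitting to produce a global sub-line-bundle of $E$ on $X$, again contradicting non-filtrability. Making this propagation rigorous is the technical crux of the proof.
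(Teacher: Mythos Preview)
Your irreducibility argument and the arithmetic-genus computation $p_a(\overline{C}_E)=2$ are sound, and so is the intersection count $A\cdot\eta(\mathcal R)=2$. The gap is entirely in your residual case. You propose to read a simple tangency of $A$ with the branch curve as the condition that $E|_{T_{b_0}}\cong\lambda\oplus\lambda$ is split (as opposed to the Atiyah extension) and then to ``propagate'' this splitting to a global sub-line-bundle of $E$, contradicting non-filtrability. Neither step is justified: the spectral curve only records the $S$-equivalence class of $E|_{T_b}$, so the split/non-split dichotomy is not what the tangency encodes, and even if one fiber restriction were split there is no mechanism that produces a global rank-one subsheaf of $E$ from this local data. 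You flag the second point yourself; it is not a technicality but a genuinely missing idea.

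The paper disposes of the singular case by an entirely different and much shorter argument. Once $p_a(\overline{C}_E)=2$, the normalization $C$ has genus $2$ (and then $C=\overline{C}_E$ is smooth) or genus $1$; in the latter case $C\to B$ is an \'etale double cover, and a structure result from \cite{AT} shows that $E$ must be an elementary modification of $\varphi_*L$ for some line bundle $L$ on $C\times_B X$, where $\varphi:C\times_B X\to X$ is the induced double cover. Since $\Delta(\varphi_*L)=0$ and each elementary modification changes the discriminant by an element of $\tfrac12\mathbb Z$, this forces $\Delta(E)\in\tfrac12\mathbb Z_{\ge 0}$, contradicting $\Delta(E)=\tfrac14$. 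No filtrability argument enters at this stage.
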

\begin{proof}
Consider the bisection $\overline{C}_E$ of $E\in\mathcal M$,
and $G_E\in|A_0 + \mathfrak{b}f|$ its graph in $\mathbb F_\delta$.
An intersection computation on $\mathbb F_\delta$
shows exactly as in Lemma 3.10 of \cite{Brinzanescu-Moraru1}
(which treats the smooth case) that the arithmetic
genus of $\overline{C}_E$ equals 2. Its normalization
$C$ is thus either a genus-2 curve and $C=\overline{C}_E$, or
an elliptic curve. In the latter case, the composition 
$C\to \overline{C}_E\to B$ is an \'etale cover
and it was shown in \cite{AT} that $E$ is 
an elementary modification of a push-forward of a line
bundle on the fibred-product $C\times_BX$.
However, the discriminant of a push-forward is zero,
and any of its elementary modifications will have a
discriminant of type $n/2$ with $n\in \mathbb N$.
Therefore, the only case that can occur is $C=\overline{C}_E$.
\end{proof}

\begin{remark}
 It follows from Corollary \ref{cor:smooth  bisection} above and
Theorem 4.5 in \cite{Brinzanescu-Moraru2} 
that any rank-2 vector bundle in $\mathcal M$ is
the direct image of a line bundle under a double cover of $X$.
A similar statement is known to be true for $\Delta=0$
\cite{AT}.
\end{remark}

\begin{remark}
\label{rmk: splitting type}
Given any spectral cover, one can construct a bundle in $\mathcal{M}$
corresponding to it (see Theorem 4.4 in \cite{Brinzanescu-Moraru1}). Consequently,
for any given fibre $T_b$ over a point $b\in B$
and any $\lambda\in T^*$, there exists a bundle $E$ on $X$ with splitting type $(\lambda, \delta_b\lambda^{-1})$
over $T_b$. 
\end{remark}

\begin{remark}
Note that the spectral curves $C_E$ are smooth curves of genus 2
(see Lemma 3.6 in \cite{Brinzanescu-Moraru1}).
 Moreover, they are all isomorphic, as curves. Indeed, we have seen that
the spectral covers are parametrised by $\mathrm{Pic}^{c_2}(B)$ and it
is known \cite{Kas} that the moduli space $\mathfrak M_2$ of curves of genus 2
does not contain projective subvarieties of positive dimension. Hence
the image of $\mathrm{Pic}^{c_2}(B)$ in $\mathfrak M_2$ is a point.
 However the double cover maps $C_E\to B$ differ.
\end{remark}

From Theorem 4.5 in \cite{Brinzanescu-Moraru2} 
it follows  that the set of all
bundles in $\mathcal{M}$ with fixed spectral cover $C$ is parametrised by 
$\mathrm{Prym}(C/B)$, where $\mathrm{Prym}(C/B)$ is here a smooth curve of genus 1.
 
Consider the spectral map 
\[H: \mathcal{M} \rightarrow \mathrm{Pic}^{c_2}(B)\]
that associates to each bundle $E$ in $\mathcal{M}$ its spectral cover $C_E$.
It is then an elliptic fibration whose fibre over $C$ in
${\rm Pic}^{c_2}(B)$ is $\mathrm{Prym}(C/B)$. 
Consequently, given that $\mathcal{M}$ is 
holomorphic symplectic, we have the following:
\begin{lemma}
The moduli space $\mathcal{M}$ is either an elliptic two-dimensional complex torus 
or a Kodaira surface.
\end{lemma}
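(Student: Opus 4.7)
The plan is to combine three facts about $\mathcal{M}$ and then invoke Kodaira's classification of elliptic surfaces. First, $\mathcal{M}$ is a smooth, compact, holomorphic symplectic surface, so its canonical bundle is trivial; in particular its Kodaira dimension is $0$ and $\chi(\mathcal{O}_\mathcal{M})=0$. Second, the spectral map $H\colon\mathcal{M}\to \mathrm{Pic}^{c_2}(B)$ already exhibits $\mathcal{M}$ as an elliptic fibration whose base is an elliptic curve. Third, every fibre of $H$ is the smooth genus-$1$ curve $\mathrm{Prym}(C/B)$, so the fibration has no singular fibres whatsoever.

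Next I would argue that an elliptic fibration $H\colon\mathcal{M}\to B'$ over a smooth curve with only smooth fibres is automatically a principal elliptic bundle in the analytic topology. This is a standard consequence of the classification of elliptic fibrations (see \cite{BHPV}, Chapter V): the relative Jacobian acts simply transitively on each fibre, and in the absence of singular fibres and multiple fibres there is no obstruction to trivializing the torsor analytically locally over $B'$. Thus $\mathcal{M}\to B'$ is a principal bundle with fibre a fixed elliptic curve $T'$.

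Now I would use the classification of principal elliptic bundles over an elliptic base. Such a bundle is classified by its topological Chern class in $H^2(B',\mathbb{Z})\cong\mathbb{Z}$: if this class vanishes then the bundle is topologically trivial and, being a holomorphic principal $T'$-bundle over an elliptic curve with trivial underlying topological bundle, $\mathcal{M}$ is a two-dimensional complex torus; if the class is non-zero, then by definition $\mathcal{M}$ is a primary Kodaira surface. Both cases are compatible with the trivial canonical bundle and with $\chi(\mathcal{O}_\mathcal{M})=0$, while all other candidates with $\kappa=0$ in the Kodaira–Enriques list (K3, Enriques, bielliptic, secondary Kodaira) are excluded either by the holomorphic symplectic hypothesis, by the existence of a smooth elliptic fibration over an elliptic base, or by both.

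I expect the only subtle point to be the step asserting that a fibration with only smooth elliptic fibres over a smooth base really is a principal bundle rather than merely a Jacobian torsor with non-trivial analytic twist concentrated on a proper subvariety; but since there are no multiple or singular fibres, this reduces to a cohomological statement over $B'$ that is recorded in \cite{BHPV}. Everything else is bookkeeping against the classification list.
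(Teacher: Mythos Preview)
Your plan reaches the right conclusion and is close in spirit to the paper's argument, which is essentially a one-liner: since $\mathcal{M}$ is holomorphic symplectic its canonical bundle is trivial, and the Enriques--Kodaira classification then lists K3, two-dimensional tori, and primary Kodaira surfaces as the only possibilities; K3 is excluded because $\mathcal{M}$ maps onto the elliptic curve $\mathrm{Pic}^{c_2}(B)$ (so $q(\mathcal{M})\ge 1$). You take a longer route by first promoting the smooth elliptic fibration to a principal bundle and then classifying principal elliptic bundles over an elliptic base. That is legitimate, and indeed the paper establishes the principal-bundle structure immediately \emph{after} this lemma, but it is not needed for the lemma itself.

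One genuine slip: the implication ``$K_{\mathcal{M}}$ trivial $\Rightarrow \chi(\mathcal{O}_{\mathcal{M}})=0$'' is false as stated --- a K3 surface has trivial canonical bundle but $\chi(\mathcal{O})=2$. What is true here is that the smooth elliptic fibration forces the topological Euler number $e(\mathcal{M})=0$, and then Noether's formula $12\chi(\mathcal{O}_{\mathcal{M}})=K_{\mathcal{M}}^2+e(\mathcal{M})=0$ gives $\chi(\mathcal{O}_{\mathcal{M}})=0$. Since your subsequent argument does not actually rely on $\chi(\mathcal{O}_{\mathcal{M}})=0$, this is a cosmetic error rather than a structural one, but you should correct the justification. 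Also note that the ``subtle point'' you flag (smooth fibres and no multiple fibres $\Rightarrow$ principal bundle) is not entirely trivial: one must observe that the $j$-invariant is constant on the compact base and that the monodromy is trivial, so the relative Jacobian is a product; only then is $\mathcal{M}$ a torsor under a fixed elliptic curve. The shorter classification argument sidesteps this altogether.
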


We therefore know that $\mathcal{M}$ is an principal elliptic fibre bundle over $B$;
let us now show that it is in fact a principal $T^*$-bundle.

\begin{lemma}
\label{lemma:principal}
The moduli space $\mathcal{M}$ is a principal $T^*$-bundle over $B$.
\end{lemma}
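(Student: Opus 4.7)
The plan is to combine the spectral fibration $H\colon \mathcal M\to \mathrm{Pic}^{c_2}(B)$ with the preceding lemma to recognize $\mathcal M\to B$ as a principal elliptic bundle whose fibre is $\mathrm{Prym}(C/B)$, and then to identify that Prym with $T^*$. After choosing a base divisor of degree $c_2$ to identify $\mathrm{Pic}^{c_2}(B)\cong B$, the map $H$ together with the preceding lemma already yields a principal bundle structure on $\mathcal M\to B$ for some elliptic curve $F$; the remaining content is the isomorphism $F\cong T^*$ of elliptic curves.

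To build this identification, I would use the embedding $C\subset J(X)=B\times T^*$, the projections $p_1$, $p_2$, and the deck involution $\sigma$ of $\pi_{C/B}:=p_1|_C$. Since $\sigma$ is the restriction of $i_\delta\colon (b,\lambda)\mapsto(b,\delta_b\lambda^{-1})$, one has the identity
\[
 p_2\circ\sigma \;=\; \delta\circ\pi_{C/B} \;-\; p_2|_C
\]
as morphisms $C\to T^*$ into the abelian variety $T^*$. Pullback together with the theorem of the square for degree-zero line bundles on abelian varieties then gives, for every $M\in\mathrm{Pic}^0(T^*)$,
\[
 \sigma^*(p_2^* M) \;=\; \pi_{C/B}^*(\delta^* M) \otimes (p_2^* M)^{-1}.
\]
Hence the anti-invariant combination $(p_2^* M)^{\otimes 2}\otimes\pi_{C/B}^*(\delta^* M)^{-1}$ lies in $\mathrm{Prym}(C/B)$. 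Non-constancy of $p_2|_C$ — forced by $C$ being a bisection of $p_1$ — ensures the resulting group morphism $\mathrm{Pic}^0(T^*)\to\mathrm{Prym}(C/B)$ is non-zero, hence a surjective isogeny between one-dimensional abelian varieties. Composing with the principal-polarization isomorphism $T^*\cong\mathrm{Pic}^0(T^*)$ of the elliptic curve $T^*$ yields an isogeny $T^*\to\mathrm{Prym}(C/B)$.

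The remaining steps are to verify that this isogeny has degree one — so that $\mathrm{Prym}(C/B)\cong T^*$ canonically — and to check that the identification varies holomorphically with $C\in\mathrm{Pic}^{c_2}(B)$, descending to a global trivialization of the fibre-wise elliptic curve and hence to a principal $T^*$-bundle structure on $\mathcal M\to B$. The main obstacle is the degree-one verification: I expect this to follow from an intersection-theoretic computation in $J(X)=B\times T^*$ using the known class of $C$ as $[B_0]+[\Sigma_\delta]+\mathfrak{b}[\text{fibre of }p_1]$, together with the decomposition up to isogeny $\mathrm{Jac}(C)\sim \pi_{C/B}^*\mathrm{Pic}^0(B)\oplus\mathrm{Prym}(C/B)$, or alternatively from exhibiting an explicit inverse $\mathrm{Prym}(C/B)\to T^*$ via a Poincar\'e-type construction on $T\times T^*$.
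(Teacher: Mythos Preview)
Your approach is genuinely different from the paper's, and it has a real gap at exactly the point you flag.

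The paper does not attempt to compute $\mathrm{Prym}(C/B)$ for an arbitrary spectral curve. Instead it exploits the fact (from the preceding lemma) that $\mathcal M\to B$ is already a \emph{principal} elliptic bundle, so all fibres are isomorphic and it suffices to identify one of them. The paper then builds one specific genus-$2$ double cover $C\to B$ by the Kani construction: starting from the isogeny $\hat c_1:B\to T^*$ of odd degree $-c_1^2/2$, the restriction $\psi:=\hat c_1[2]$ is an isomorphism $B[2]\to T^*[2]$, and the quotient $(B\times T^*)/\mathrm{Graph}(\psi)$ is the Jacobian of a genus-$2$ curve $C$ with $\mathrm{Prym}(C/B)\cong T^*$ by Kani's theory. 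The remaining work is to check that this particular $C$ actually occurs as a spectral cover in $\mathcal M$, which follows from the push-forward construction in \cite{ABT} together with surjectivity of the spectral map.

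Your route---constructing an explicit homomorphism $\mathrm{Pic}^0(T^*)\to\mathrm{Prym}(C/B)$ via $M\mapsto (p_2^*M)^{\otimes 2}\otimes\pi_{C/B}^*(\hat c_1^*M)^{-1}$---is natural, but the degree-one step is not a formality. Two concerns: first, the squaring already suggests the map factors through multiplication by $2$ on the source, so degree $1$ is implausible for this particular map as written; you would more likely land on an isogeny of degree divisible by $4$, and isogenous elliptic curves need not be isomorphic. Second, the arithmetic input that makes the paper's argument work---the oddness of $-c_1^2/2=1-2c_2$---is invisible in your setup, yet it must enter somewhere if the argument is to succeed. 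Without it you cannot separate $\mathrm{Prym}(C/B)$ from, say, a quotient of $T^*$ by a $2$-torsion subgroup. So the ``main obstacle'' you identify is the entire content of the lemma, and neither of your two suggested strategies (intersection theory on $B\times T^*$, or an explicit inverse) is carried out.
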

\begin{proof}
It is sufficient to prove that if $C$ is the spectral cover of any 
element of $\mathcal{M}$, then $\mathrm{Prym}(C/B) \cong T^*$.  
Referring to \cite{ABT}, section 3, one can construct such a spectral cover as follows. 
Recall that, by corollary \ref{cor:smooth bisection}, the spectral curve of any bundle in $\mathcal{M}$  is smooth and irreducible. Recall also that any holomorphic rank-2 vector bundle on $X$ with Chern classes $c_1$ and $c_2$ is 
irreducible (see section 2).  

Consider the isogeny $\hat{c}_1: B \rightarrow T^*$ induced by $c_1$, 
which has odd degree $-c_1^2/2$ (see section \ref{sec:rank-2}).
The map $\psi := \hat{c}_1[2]: B[2] \rightarrow T^*[2]$ is then an isomorphism because $-c_1^2/2$ and $2$ are coprime.
Denote $H_{\psi} = \text{Graph}(\psi) \subset (B \times T^*)[2]$ and set $J_\psi := (B \times T^*)/H_{\psi}$. We must consider whether or not $\psi$ is irreducible.
If $\psi$ is reducible, then there exists a reducible rank-2 vector bundle on $X$ with Chern classes $c_1$ and $c_2$ (see \cite{ABT}, section 3, Case (b)), a contradiction. 
Consequently, $\psi$ an irreducible isometry and $J_\psi$ is the Jacobian of a curve $C$ of genus 2 that is a double cover of $B$ (see \cite{Kani}, section 1). Let us denote by $f: C \rightarrow B$ the double cover. Recall that $\mathrm{Prym}(C/B)$ is a connected component of the kernel of the norm map
$f_\ast : J_C = (B \times T^*)/H_{\psi} \rightarrow J_B \cong B$. However, referring to \cite{Kani}, 
section 1, $\ker{f_\ast} = J_T \cong T^*$, so that $\mathrm{Prym}(C/B) \cong T^*$.

Let $Y = X \times_B C$ and denote by $\varphi: Y \rightarrow X$ the double cover induced from $f: C \rightarrow B$. Then there exists a line bundle $L$ on $Y$ such that the holomorphic rank-2 vector bundle $E := \varphi_*(L)$ has Chern classes $c_1$ and $c_2$ (see \cite{ABT}, section 3, Case (a)). Moreover, the vector bundle $E$ has, by construction, spectral cover $C \rightarrow B$ \cite{Brinzanescu-Moraru2}. Recall from section 2 that $\mathbb{F}_{\det(E)} = \mathbb{F}_\delta$ since $c_1(\det(E)) = c_1(\delta)$. Consequently, by surjectivity of the spectral map $H:\mathcal{M} \rightarrow \mathrm{Pic}^{c_2}(B)$ \cite{Brinzanescu-Moraru3}, $C \rightarrow B$ is also the spectral cover of a vector bundle with determinant $\delta$ and second Chern class $c_2$.
In other words, $C \rightarrow B$ is the spectral cover of a bundle in $\mathcal{M}$ such that $\mathrm{Prym}(C/B) \cong T^*$.

\end{proof}

\begin{remark}
At this point we should mention the fact that the $T$-action on $X$ does not induce an action by pull-back on $\mathcal{M}$. The reason is that this action does not fix the determinant line bundle $\delta$ by the arguments of Lemma 4.8 in \cite{Te98}.  Indeed, since  $c_1(\delta)^2\neq 0$ (which occurs in our case because $\Delta(2,c_1,c_2)=\frac{1}{4}$), one proves that $\delta$ cannot
be fixed by the torsion part of $T$.
\end{remark}

\begin{theorem}
\label{thm:main}
The moduli space $\mathcal{M}$ is a primary Kodaira surface
with base $B$ and fibre $T^*$.
\end{theorem}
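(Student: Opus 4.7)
The plan is to rule out the case that $\mathcal{M}$ is a complex torus; the theorem will then follow from Lemma \ref{lemma:principal} together with the preceding dichotomy. Assume, for contradiction, that $\mathcal{M}$ is a two-dimensional complex torus, so in particular $\mathcal{M}$ is K\"ahler. By Proposition \ref{prop:universal sheaf} there exist a connected \'etale cover $f: Y \to \mathcal{M}$ and a genuine universal rank-$2$ bundle $\mathcal{E}$ on $X \times Y$. Since $Y$ is \'etale over a torus, $Y$ is itself a complex $2$-torus, and in particular K\"ahler.

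The next step is to reverse the roles of the two factors and regard $\mathcal{E}$ as a holomorphic family of sheaves on the K\"ahler surface $Y$ parameterized by $X$, via $x \mapsto \mathcal{E}_x := \mathcal{E}|_{\{x\} \times Y}$. Using Remark \ref{rmk: splitting type} together with the fact that the spectral covers of members of $\mathcal{M}$ vary non-trivially over $\mathrm{Pic}^{c_2}(B)$, I would verify that the isomorphism class of $\mathcal{E}_x$ genuinely depends on $x$: this uses essentially that distinct $x$ record different splitting types of the original bundles over the fibres of $\pi$. This yields a non-constant holomorphic classifying map $\Phi: X \to \mathcal{N}_Y$ into a moduli space of simple sheaves on the K\"ahler surface $Y$.

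At this point I would invoke the techniques developed in \cite{Teleman-CCM}, as indicated in the introduction. The existence of such a non-constant holomorphic family of simple bundles on the K\"ahler surface $Y$ parameterized by the primary Kodaira surface $X$, which is non-K\"ahler with odd first Betti number $b_1(X)=3$, produces a cohomological contradiction; concretely, one compares the characteristic classes of $\mathcal{E}$ read off the $X$-factor against the negative semi-definite intersection form on $\mathrm{NS}(X)$ recorded in section \ref{sec:rank-2}, or applies the Bogomolov-type inequalities available in the non-K\"ahler framework of \cite{Teleman-CCM}. This rules out the torus alternative, and combined with Lemma \ref{lemma:principal}, it identifies $\mathcal{M}$ as a primary Kodaira surface fibred as a principal $T^*$-bundle over $B$.

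The main obstacle will be the Teleman step, namely extracting from \cite{Teleman-CCM} the precise obstruction that applies here and verifying that the flipped sheaves $\mathcal{E}_x$ are sufficiently simple (or stable with respect to some Gauduchon metric on $Y$) for it to be activated. The non-constancy of $\Phi$ is the easier subsidiary check, following from fineness of the universal construction on $X \times Y$. Routine identification of the base and fibre of the resulting principal bundle is already done in Lemma \ref{lemma:principal} and requires no additional work.
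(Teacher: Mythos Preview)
Your overall strategy matches the paper's: assume $\mathcal{M}$ is a torus, pass to an \'etale cover $Y$ carrying a genuine universal bundle $\mathcal{E}$, then flip the roles and study the family $x\mapsto\mathcal{E}_x:=\mathcal{E}|_{\{x\}\times Y}$ via \cite{Teleman-CCM}. However, the way you propose to finish is not correct, and the actual argument is considerably more intricate than a single ``cohomological contradiction''.

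First, you assume the $\mathcal{E}_x$ are simple (or stable) so that a classifying map $\Phi:X\to\mathcal{N}_Y$ exists. There is no reason this should hold for general $x$, and the paper does not assume it: instead it splits into three cases according to whether $\mathcal{E}_x$ is $\omega$-unstable, $\omega$-stable, or properly semi-stable for generic $x$. In the unstable and indecomposable semi-stable cases there is a relative destabilizing (resp.\ Jordan--H\"older) subsheaf, which restricts to a rank-one subsheaf of $\mathcal{E}|_{X\times\{y\}}$ and contradicts non-filtrability of the members of $\mathcal{M}$. The decomposable semi-stable case requires a separate Brill--Noether analysis on $X\times\mathrm{Pic}^\eta(Y)$.

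Second, and more importantly, the content of Teleman's theorem is not that a non-constant family parameterized by a non-K\"ahler surface is impossible; it is that the induced map $\varphi:X^{st}\to\mathcal{M}^{st}$ is either constant or of generic rank one. Neither alternative is absurd on general grounds, and no Bogomolov-type inequality or intersection-form argument on $\mathrm{NS}(X)$ closes the case. The paper rules each alternative out using the specific geometry established earlier: if $\varphi$ is constant (or constant along the fibres $T_b$), then $\mathcal{E}|_{T_b\times\{y\}}$ is trivial, forcing the spectral curve to be a double curve, contradicting Corollary~\ref{cor:smooth bisection}; in the rank-one and semi-stable subcases one shows that all spectral curves of members of $\mathcal{M}$ would pass through a fixed pair of points in a fibre $T^*_0$, contradicting Remark~\ref{rmk: splitting type}. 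Your proposal invokes Remark~\ref{rmk: splitting type} only to establish non-constancy of $\Phi$, but its real role is precisely the opposite: it is the endgame that kills the rank-one (and related) cases that Teleman's theorem still allows. Without this case analysis and the spectral-curve input, the argument does not close.
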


\proof
Suppose to the contrary that $\mathcal M$ is a 2-torus.
From Proposition \ref{prop:universal sheaf}, there exists an \'etale cover
$Y\to \mathcal M$ and a universal rank-2 vector bundle
$\mathcal E\to X\times Y$. 
Then that $Y$ is again a 2-torus; choose $\omega$ a 
K\"ahler form on $Y$.
There are three possible cases for the general restriction
$\mathcal E|_{x\times Y}$ of $\mathcal E$.

\medskip

{\em Case 1.}
For $x\in X$ general, $\mathcal E|_{x\times Y}$
is unstable with respect to $\omega$. Then there exists a relative destabilizing 
subsheaf $\mathcal F\subset\mathcal E$ of rank 1
(see also \cite{Paiola} for the existence of Harder-Narasimhan
filtrations in any rank).
We obtain a contradiction to the non-filtrability 
of $\mathcal E|_{X\times y}$ with $y\in Y$. Hence
this case cannot occur.

\medskip

{\em Case 2.}
For $x\in X$ general, $\mathcal E|_{x\times Y}$
is stable with respect to $\omega$. As in \cite{Teleman-CCM},
we denote by $X^{st}$, $X^{sst}$ the
Zariski open subsets of $X$ corresponding to stable, respectively semi-stable, 
vector bundles (sheaves) on $Y$, $B^{st}$, $B^{sst}$ the projections
to $B$, and by $\mathcal M^{st}$ the moduli
space of $\omega$-stable vector bundles on $Y$.
Theorem 1.5 loc.cit. shows that the induced map
$\varphi:X^{st}\to \mathcal M^{st}$ is either constant
or of generic rank one. If $\varphi$ is constant
then there exists a rank-2 vector bundle
$E$ on $Y$ such that $\mathcal E|_{X^{st}\times Y}\cong \varphi^*(E)$.
In particular, excepting finitely many points, for $b\in B$ we will have
$\mathcal E|_{T_b\times y}\cong \mathcal O^{\oplus 2}_{T_b}$ for any $y$.
This means that the associated spectral covers are double curves,
which contradicts Corollary \ref{cor:smooth bisection}.

If $\varphi$ is of rank one, from the proof of
\cite{Teleman-CCM}
it follows that $\varphi$ factors through $B^{st}$.
Indeed, the curves in $X^{st}$ found in the quoted proof
are restrictions of curves in $X$ which are of
the following type
\[
 D_E:=\{x\in X\ :\
h^0(Y,E^*\otimes \mathcal E|_{x\times Y})\ne 0\},
\]
where $E$ is a stable rank-2 vector bundle on $Y$.

Then for almost any $b\in B^{st}$ (we need to exclude
those points $b$ such that $T_b\not\subset X^{st}$), 
the induced map $\varphi_b:T_b\to \mathcal M^{st}$
is constant and hence there exists a rank-2
vector bundle $E_b$ on $Y$ such that $\mathcal E|_{T_b\times Y}\cong \varphi ^*(E_b)$.
As before the restriction $\mathcal E|_{T_b\times y}$ is trivial,
which is a contradiction.

\medskip

{\em Case 3.}
For $x\in X$ general, $\mathcal E|_{x\times Y}$ is properly semi-stable.
Note that, if $\mathcal E|_{x\times Y}$ is not decomposable then
its Jordan-H\"older filtration is unique, and there will
exist, as in the unstable case, a relative
Jordan-H\"older filtration which yields a contradiction 
of the filtrability. In conclusion, we can assume
that $\mathcal E|_{x\times Y}$ is decomposable 
$\mathcal E|_{x\times Y}\cong L_{1,x}\oplus L_{2,x}$ for a general $x\in X$
(note that $L_{1,x}$ and $L_{2,x}$ are defined at $x$, and are not global objects).
We reason along the lines of \cite{Teleman-CCM}, \cite{Toma-TEZA}.
Define, for $\eta\in \mathrm{NS}(Y)$, the Brill-Noether locus
\[
 X(\eta):=\{(x,\xi)\in X\times \mathrm{Pic}^\eta(Y)\ :\
h^0(Y,\mathcal P_\xi^*\otimes \mathcal E|_{x\times Y})\ne 0\}
\]
where $\mathcal P$ is the Poincar\'e bundle on $Y\times \mathrm{Pic}^\eta(Y)$.

The projections of all $X(\eta)$, with $\eta\in \mathrm{NS}(Y)$, on $X$ are Zariski closed,
and their union coincides with $X$, by assumption.
Since $\mathrm{NS}(Y)$ is countable, there is a class $\eta\in \mathrm{NS}(Y)$ such that
$X(\eta)$ covers $X$. Let us fix one such a class $\eta$
and an irreducible component $X'$ of $X(\eta)$ which projects onto $X$
and consider $\widetilde{X}$ a desingularization of $X'$.
We may also assume that $\mu_\omega(\eta)=\mu_\omega(\mathcal E|_{x\times Y})$.

If $h^0(Y,\mathcal P_\xi^*\otimes \mathcal E|_{x\times Y})=1$
for a general $(x,\xi)\in X'$ then, as before we 
shall reach a contradiction of
the non-filtrability of the restrictions of $\mathcal E$.
Indeed, this situation represents the case
$L_{1,x}\not\cong L_{2,x}$. Thus the map
$X'\to X$ is generically finite with one or two sheets,
and hence $\widetilde{X}$ is a non-K\"ahler surface.
It is clear that the induced projection
$\widetilde{X}\to\mathrm{Pic}^\eta(Y)$ 
is either constant or of generic rank one.
If it is constant, we get once more a contradiction
of the non-filtrability of $\mathcal E|_{X\times y}$.
If it is of generic rank one, then let $F$ be one 
component of one of its (general) fibres which covers a fibre $T_0$ of
$\pi:X\to B$. Let $\xi$ be the image of $F$
in $\mathrm{Pic}^\eta(Y)$. We see that
$h^0(\mathcal E|_{x\times Y}\otimes \mathcal P_\xi^*)=1$
for $x\in T_0$ general. Hence the direct image sheaf $M$
of $\mathcal E|_{T_0\times Y}\otimes \mathcal P_\xi^*$
through the projection $p_0$ on $T_0$ 
is of rank one, which implies the existence
of a non-trivial morphism
$p_0^*(M)\otimes \mathcal P_\xi\to \mathcal E|_{T_0\times Y}$.
In particular, all the spectral curves in $B\times T^*$
of the bundles $\mathcal E|_{X\times y}$ pass through 
one fixed point of $T_0^*$ which is impossible;
since the determinant of the bundles $\mathcal E|_{X\times y}$
is fixed, the second point in $T_0^*$ must be fixed, too.
This contradicts Remark \ref{rmk: splitting type}.

The case $h^0(Y,\mathcal P_\xi^*\otimes \mathcal E|_{x\times Y})\ge 2$
corresponds precisely to the situation where $\mathcal E|_{x\times Y}\cong
\mathcal P_\xi^{\oplus 2}$ for some $\xi\in\mathrm{Pic}^\eta(Y)$;
in fact, $h^0(Y,\mathcal P_\xi^*\otimes \mathcal E|_{x\times Y}) = 2$. 
In particular, we obtain a map $x\mapsto \xi$, from $\widetilde{X}$ to
$\mathrm{Pic}^\eta(Y)$. As before, we have two cases. If this map
is constant, then we infer that $\mathcal E$ is a pull-back from $X$.
This is impossible, as $Y$ is a component of the moduli space.
If this map is of generic rank one, then, for a general fibre $T$ of $\pi$,
$\mathcal E|_{T\times Y}$ is the pullback of a sheaf on $T$, which fixes again
the two points in $T^*$ for all spectral curves.
We obtain a contradiction of the Remark~\ref{rmk: splitting type}.
\endproof

\begin{remark}
 The original Kodaira surface $X$ and its moduli pair $\mathcal M$ have the
 same Neron-Severi group over $\mathbb Q$, see \cite{BrinzanescuLNM}
 for the description of Neron-Severi groups of elliptic fibrations.
\end{remark}

\begin{remark}
 From the construction, we have a universal 
 curve $\mathcal C\subset B\times T^*\times B^*$, and
 denote by $\mathcal W\subset X\times T^*\times B^*$ the induced
 fibered product.
 It is clear that we have a natural section $\Sigma_{\mathcal C}:\mathcal C\to  
 \mathcal C\times T^*=\mathrm{Jac}(\mathcal W/\mathcal C)$
 induced by the projection $B\times T^*\times B^*\to T^*$. 
 However, this section is not the spectral curve of a line bundle $\mathcal L$
 on $\mathcal W$. Indeed, if it were, then the push-forward of $\mathcal L$
 to $X\times B^*$ would produce a section (possibly a
 multi-section)
 $B^*\to \mathcal M$, which would contradict Theorem \ref{thm:main}.
 This fact is an obstruction to the realization of the universal
 rank-two bundle on $X\times \mathcal M$ (supposing that
 it exists) as the push-forward of some universal line bundle
 on an appropriate double cover.
\end{remark}

\section{Topologically isomorphic pairs}
\label{sec:pairs}

We prove next that, in the presence of a universal bundle (for instance,
in the setup of Example \ref{ex:universal}), the 
original primary Kodaira surface and its moduli pair are
topologically isomorphic up to some finite unramified cover. 
For the existence of a homeomorphism we use
the following topological fact.

\begin{lemma}
\label{lemma:iso}
 Two primary Kodaira surfaces $X$ and $X'$ 
are topologically isomorphic if and only if 
$\mathrm{ord}(Tors(\mathrm{NS}(X))=\mathrm{ord}(Tors(\mathrm{NS}(X'))$.
\end{lemma}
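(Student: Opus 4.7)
The necessary direction is immediate: a homeomorphism $X\to X'$ induces an isomorphism $H^2(X',\mathbb Z)\cong H^2(X,\mathbb Z)$ of abelian groups, hence of torsion subgroups, and Section~\ref{general} already identifies $\mathrm{Tors}(\mathrm{NS}(X))$ with $\mathrm{Tors}(H^2(X,\mathbb Z))$, so the two orders agree.

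For sufficiency the plan is to show that the underlying smooth, hence topological, type of a primary Kodaira surface depends only on the integer $m$. Write $X=\Theta^*/\langle\tau\rangle$ with $\deg\Theta=m$ and fix a Hermitian metric $h$ on $\Theta$; this yields a smooth splitting $\Theta^*\cong N_m\times\mathbb R_+$ via $v\mapsto(v/|v|,|v|)$, where $N_m\to B$ is the unit circle subbundle, a principal $S^1$-bundle of Euler number $m$. In these coordinates the action $v\mapsto\tau v$ reads $(n,s)\mapsto(R_{\arg\tau}\cdot n,\,|\tau|\,s)$, where $R_\theta$ denotes rotation by $\theta$ along the $S^1$-fibres of $N_m\to B$. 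The key step is to introduce the explicit fibrewise diffeomorphism
\[
\phi:N_m\times\mathbb R_+\longrightarrow N_m\times\mathbb R_+,\qquad
\phi(n,s):=\bigl(R_{\alpha(s)}\cdot n,\,s\bigr),\quad \alpha(s):=-\frac{\arg\tau}{\log|\tau|}\log s,
\]
and to verify by a direct computation that $\phi$ intertwines the given $\tau$-action with the pure dilation $(n,s)\mapsto(n,|\tau|\,s)$. Passing to quotients then produces a diffeomorphism $X\cong N_m\times(\mathbb R_+/\langle|\tau|\rangle)=N_m\times S^1$.

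The right-hand side visibly depends only on $m$: since $B$ is diffeomorphic to $T^2$ regardless of its complex structure, and principal $S^1$-bundles over $T^2$ with the same positive Euler number are diffeomorphic as total spaces, we have $N_m\cong N_{m'}$ whenever $m=m'$, and therefore $X\cong X'$. The main technical point is the construction of the intertwining diffeomorphism $\phi$, which reduces the general case to the transparent situation $\tau\in\mathbb R_+$ where the quotient is patently a product; the rest is bookkeeping about the classification of $S^1$-bundles over the $2$-torus by the Euler number.
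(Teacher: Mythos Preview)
Your argument is correct. The intertwining map $\phi$ does what you claim: since $\alpha(|\tau|s)=\alpha(s)-\arg\tau$, one checks directly that $\phi$ conjugates the $\tau$-action to the pure dilation, and the rest follows from the classification of principal $S^1$-bundles over $T^2$ by Euler number.

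The paper's proof is quite different in style: it simply invokes Kodaira's description of $\pi_1(X)$ and $H_1(X,\mathbb Z)\cong\mathbb Z^{\oplus 3}\oplus\mathbb Z/m\mathbb Z$ from \cite{Ko}, notes that this already pins down the topological type, and then appeals to the universal coefficient theorem to match $\mathrm{Tors}(H_1)$ with $\mathrm{Tors}(H^2)$. By contrast, you give a self-contained geometric argument producing an explicit diffeomorphism $X\cong N_m\times S^1$. What your route buys is transparency and independence from Kodaira's classification; what the paper's citation buys is brevity. It is worth noting that the product description $X\cong N_m\times S^1$ you establish is exactly what the paper derives (via a different argument, citing \cite{BHPV} V~Prop.~5.2) in the proof of the \emph{next} lemma, Lemma~\ref{lemma:unramified}; so your approach effectively front-loads that step and makes both lemmas fall out at once.
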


\proof
The proof is implicitly contained in \cite{Ko}, 
\cite{BHPV}, \cite{BrinzanescuLNM}. 

Note that the torsion of the Neron-Severi group of a Kodaira surface $X$ 
coincides with the torsion of $H^2(X,\mathbb Z)$.
In his foundational paper \cite{Ko}, Kodaira describes the
fundamental group and the first homology group of a
Kodaira surface $X$. It  follows that the topological
type is determined by a positive integer $m$. On the
other hand, $H_1(X,\mathbb Z)\cong \mathbb Z^{\oplus 3}\oplus \mathbb Z/m\mathbb Z$.
Our assertion is an immediate consequence of the universal coefficient
formula.
\endproof

\begin{lemma}
\label{lemma:unramified}
 Let $X$ and $X^\prime$ be two primary Kodaira surfaces, and
 denote $m=\mathrm{ord}(Tors(\mathrm{NS}(X))$
 and $m^\prime=\mathrm{ord}(Tors(\mathrm{NS}(X'))$.
 If $m | m^\prime$, then $X^\prime$ is homeomorphic to
 an unramified cover of $X$.
\end{lemma}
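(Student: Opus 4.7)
The plan is to realize $X'$ (up to homeomorphism) as the total space of a principal elliptic bundle obtained from $X$ by pulling back along a suitable isogeny of the base $B$. The key point is that by Lemma \ref{lemma:iso}, it is enough to produce any unramified cover of $X$ that is a primary Kodaira surface whose torsion invariant equals $m'$.

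Write $k:=m'/m$, a positive integer. Since $B$ is an elliptic curve, it admits degree-$k$ isogenies in abundance (for instance, the multiplication-by-$k$ map, or the quotient by any order-$k$ subgroup of $B$); fix one such isogeny $\varphi:\tilde B\to B$. Form the fiber product
\[
\tilde X := X\times_B \tilde B,
\]
and let $\psi:\tilde X\to X$ be the projection onto the first factor. Because $\varphi$ is étale of degree $k$, so is $\psi$, so $\tilde X$ is an unramified cover of $X$ of degree $k$.

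Next I would verify that $\tilde X$ is a primary Kodaira surface with torsion invariant $m'$. Using the model $X=\Theta^*/\langle\tau\rangle$ recalled in Section~\ref{sec:rank-2}, with $\Theta$ a line bundle of degree $m$ on $B$, the base change is canonically identified with
\[
\tilde X \;=\; (\varphi^*\Theta)^*/\langle\tau\rangle,
\]
where $\varphi^*\Theta$ is a line bundle on $\tilde B$ of degree $k\cdot m = m'$. Since $m'>0$, this is a topologically non-trivial principal $T$-bundle over the elliptic curve $\tilde B$, and therefore $\tilde X$ is a primary Kodaira surface. By the cohomology formula recalled in Section~\ref{sec:rank-2}, the torsion of $H^2(\tilde X,\mathbb Z)$ (and hence of $\mathrm{NS}(\tilde X)$) has order equal to the degree of the defining line bundle on $\tilde B$, namely $m'$.

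Finally, applying Lemma \ref{lemma:iso} to $\tilde X$ and $X'$, both of which have torsion invariant $m'$, produces a homeomorphism $\tilde X \cong X'$. Composing with $\psi:\tilde X\to X$ exhibits $X'$ as (homeomorphic to) an unramified degree-$k$ cover of $X$, as required. The only technical point worth being careful about is the behaviour of the torsion invariant under the base change, but this reduces to the multiplicativity of the degree under pull-back of line bundles along $\varphi$; no serious obstacle is expected.
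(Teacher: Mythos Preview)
Your argument is correct. You construct the cover holomorphically by pulling $X$ back along a degree-$k$ isogeny $\varphi:\tilde B\to B$ and then use the explicit model $X=\Theta^*/\langle\tau\rangle$ from Section~\ref{sec:rank-2} to identify the base change with $(\varphi^*\Theta)^*/\langle\tau\rangle$; since $\deg(\varphi^*\Theta)=k\cdot m=m'$, Lemma~\ref{lemma:iso} finishes the job. All steps are sound, and the only point you flag --- multiplicativity of degree under pullback --- is indeed routine.

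The paper takes a different, purely topological route: it shows (via \cite{BHPV}, V Proposition~5.2) that a primary Kodaira surface with invariant $m$ is homeomorphic to $S^1\times P_m$, where $P_m$ is the principal $S^1$-bundle over the $2$-torus with Euler class $m$, and then appeals implicitly to the standard fact that $P_{m'}$ covers $P_m$ whenever $m\mid m'$. Your approach has the advantage of being fully explicit and of staying inside the complex-analytic description already set up in Section~\ref{sec:rank-2}; the paper's approach, by contrast, reveals the simple topological structure underlying the statement but leaves the final covering step to the reader. Either way one is ultimately using that the invariant of the pullback (respectively the Euler class) scales by the degree of the base cover.
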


\proof
Let $T=\mathbb C/\Gamma$ be the fibre
of the fibration $X\to B$, and denote by $c\in H^2(B,\Gamma)$ 
the topological class of the principal fibre bundle $X\to B$, \cite{BHPV} p. 195. 
There is a primitive embedding $i:\mathbb Z\to \Gamma$ such that
$c$ is the image of an element $\eta\in H^2(B,\mathbb Z)$ through 
$H^2(i):H^2(B,\mathbb Z)\to H^2(B,\Gamma)$ loc. cit.
Taking continuous sections in the diagram 
\xycenter{
0\ar[r] & \mathbb Z \ar[r]\ar[d]^{i} & \mathbb C \ar[r]\ar@{=}[d] & \mathbb C^*\ar[r]\ar[d] & 0 \\
0\ar[r] & \Gamma \ar[r] & \mathbb C \ar[r] & T\ar[r] & 0
}
it follows, as in loc. cit. V Prop. 5.2, that $X$ is topologically a product of $S^1$
with the $S^1$-bundle over $B$ given by $\eta$. By our previous considerations
it becomes clear that $\eta$ may be identified to $m\in \mathbb Z$
via the canonical isomorphism $H^2(B,\mathbb Z)\cong \mathbb Z$. The conclusion now follows.
\endproof

With the preceeding notation, for text
coherence put $X':=\mathcal M$.

\begin{theorem}
\label{thm:iso}
 If a universal rank-two
vector bundle on $X\times X'$ exists, then
$X'$ is topologically isomorphic with an \'etale cover of $X$. 
\end{theorem}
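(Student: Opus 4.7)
The plan is to apply Lemma~\ref{lemma:unramified}, which reduces the theorem to proving the divisibility $m\mid m'$ between the torsion orders $m := \mathrm{ord}(\mathrm{Tors}(\mathrm{NS}(X)))$ and $m' := \mathrm{ord}(\mathrm{Tors}(\mathrm{NS}(X')))$. Indeed, Theorem~\ref{thm:main} together with Lemma~\ref{lemma:iso} already tells us that $X'$ is a primary Kodaira surface whose topological type is completely determined by $m'$, so nothing further needs to be identified beyond this numerical invariant.

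The divisibility should be extracted from the universal bundle $\mathcal E$ on $X\times X'$, which provides a geometric bridge between the two surfaces. Concretely, I would restrict $\mathcal E$ to slices $\{x\}\times X'$ to obtain a family of rank-two bundles on $X'$ parametrised by $X$, and then apply the spectral cover machinery of Brinz\u anescu--Moraru on the Kodaira surface $X'$ to interpret this dual family. Together with the original spectral interpretation of points of $X'$ as bundles on $X$, this should yield a canonical continuous map $\Phi : X'\to X$ of principal torus bundles over $B$, whose restriction to fibres is an isogeny of elliptic curves of degree expressible in terms of $-c_1^2/2 = 1-2c_2$ (for instance via the isogeny $\hat c_1: B\to T^*$ of odd degree). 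Comparing the classifying classes in $H^2(B,\mathbb Z)\cong \mathbb Z$ of the two principal bundles, as reviewed in the proof of Lemma~\ref{lemma:unramified}, then forces $m\mid m'$.

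The main obstacle is constructing $\Phi$ (or an equivalent characteristic class identity) rigorously and computing its fibre degree. This step genuinely requires universality and not merely quasi-universality: the proof of Proposition~\ref{prop:universal sheaf} shows that the passage from a quasi-universal to a universal object is measured by a torsion cocycle in $H^2(\mathcal{O}^*)$, and it is precisely this cocycle that would obstruct gluing local fibrewise comparisons into a globally defined $\Phi$. Once the divisibility $m\mid m'$ is established, a direct appeal to Lemma~\ref{lemma:unramified} produces the homeomorphism from $X'$ onto an unramified cover of $X$ and concludes the proof.
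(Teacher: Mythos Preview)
Your overall strategy is the same as the paper's: reduce to Lemma~\ref{lemma:unramified} by showing $m\mid m'$, where $m,m'$ are the torsion orders of $\mathrm{NS}(X)$ and $\mathrm{NS}(X')$. That reduction is correct and is exactly what the paper does.

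The gap is in how you propose to extract the divisibility. You suggest building a continuous map $\Phi:X'\to X$ of principal torus bundles over $B$ by restricting $\mathcal E$ to slices $\{x\}\times X'$ and running the spectral-cover machinery on $X'$, but you do not actually construct $\Phi$, and you flag this yourself as ``the main obstacle''. This is not a minor omission. The restrictions $\mathcal E|_{\{x\}\times X'}$ have no a priori reason to be irreducible or even semistable on the fibres of $X'\to B$ (the analysis in the proof of Theorem~\ref{thm:main} already shows how delicate these restrictions are), so the spectral-cover description on $X'$ is not immediately available, and there is no evident mechanism producing a \emph{morphism of principal bundles} with controllable fibre degree. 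Without that, the comparison of classifying classes in $H^2(B,\mathbb Z)$ cannot be carried out.

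The paper avoids this entirely. It does not build any map $X'\to X$. Instead it uses the universal bundle to define the determinant of the Fourier--Mukai transform
\[
\lambda_{\mathcal E}:K(X')\xrightarrow{\ FM_{\mathcal E}\ }K(X)\xrightarrow{\ \det\ }\mathrm{Pic}(X),
\qquad \varphi_{\mathcal E}:=c_1\circ\lambda_{\mathcal E},
\]
and exploits two facts: (i) $\lambda_{\mathcal E}$ is continuous on $\mathrm{Pic}(X')$, so $\varphi_{\mathcal E}([\mathcal O_{m'F'}])=0$ since $\mathcal O_{X'}(-m'F')\in\mathrm{Pic}^0(X')$; and (ii) an explicit computation of $\varphi_{\mathcal E}([\mathcal O_{F'}])$. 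For (ii), one restricts $\mathcal E$ over a fibre $X'_b\cong\mathrm{Prym}(C/B)$, uses the push-forward description $\mathcal E_b\cong f_{X\times X'_b,*}(\mathrm{pr}_{X_C}^*L\otimes\widetilde{\mathcal P})$ from \cite{Brinzanescu-Moraru2}, and applies Lemma~\ref{lem:double cover} to identify $f^*f_*$ as an elementary modification. A base-change and semicontinuity argument then shows $R^1\mathrm{pr}_{X,*}(\mathcal E_b)$ is a rank-one sheaf supported on a single fibre $F$ of $X\to B$, hence $\varphi_{\mathcal E}([\mathcal O_{F'}])=c_1(\mathcal O_F)$. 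Since $c_1(\mathcal O_F)$ has exact order $m$ in $\mathrm{NS}(X)$ and $m'\cdot\varphi_{\mathcal E}([\mathcal O_{F'}])=0$, one gets $m\mid m'$.

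So the missing idea in your proposal is precisely this: rather than trying to produce a geometric map between the surfaces, push the class of a fibre of $X'$ through the Fourier--Mukai/determinant map and compute the result explicitly via the double-cover presentation of the bundles in $\mathcal M$.
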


In the proof, we shall need the following general fact:

\begin{lemma}
\label{lem:double cover}
Let $X$, $Y$ be two complex manifolds, not necessarily compact,
together with a $(2:1)$ holomorphic map $f:Y\to X$, and denote by $\iota:Y\to Y$
the natural involution and by $R$ the ramification divisor on $Y$.
Then, for any line bundle $\mathcal L$ on $Y$, we have a
short exact sequence
\[
0\to f^*f_*\mathcal L\stackrel{\alpha}{\longrightarrow} 
\mathcal L\oplus \iota^*\mathcal L\stackrel{\beta}{\longrightarrow} \mathcal L|_R\to 0.
\] 
\end{lemma}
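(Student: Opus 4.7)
The plan is to build $\alpha$ and $\beta$ from canonical constructions (adjunction and restriction to $R$), and then verify exactness by a local computation at the ramification divisor, since away from $R$ the cover is \'etale and the sequence reduces to a triviality.

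For the map $\alpha$: I would use the counit of adjunction $\varepsilon : f^*f_*\mathcal L \to \mathcal L$, and observe that since $f \circ \iota = f$ there is a canonical identification $\iota^*f^*f_*\mathcal L \cong f^*f_*\mathcal L$, which allows me to apply $\iota^*$ to $\varepsilon$ and obtain a second map $\iota^*\varepsilon : f^*f_*\mathcal L \to \iota^*\mathcal L$. Setting $\alpha := (\varepsilon,\ \iota^*\varepsilon)$ yields the desired morphism to $\mathcal L \oplus \iota^*\mathcal L$. For $\beta$: since $\iota|_R = \mathrm{id}_R$, restriction provides a canonical identification $(\iota^*\mathcal L)|_R \cong \mathcal L|_R$, and I define $\beta(u,v) := u|_R - v|_R$ under this identification. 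The composition $\beta \circ \alpha$ then vanishes immediately: for any local section $s$ of $f^*f_*\mathcal L$, the sections $\varepsilon(s)|_R$ and $(\iota^*\varepsilon)(s)|_R$ coincide in $\mathcal L|_R$, because the involution restricts to the identity on $R$.

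To verify exactness, I would argue locally. Over $X \setminus f(R)$ the map $f$ is \'etale, flat base change gives $f^*f_*\mathcal L \cong \mathcal L \oplus \iota^*\mathcal L$ realised by $\alpha$, and $\mathcal L|_R = 0$, so the sequence is trivially exact there. Near a ramification point, the local structure theorem for simple branched double covers puts things in the form $Y = \mathrm{Spec}(A[t]/(t^2 - s))$, with $A$ a local ring of $X$, $s$ a regular local equation of $f(R)$, involution $\iota^*t = -t$, and $R = \{t = 0\}$. After trivialising $\mathcal L$, the pushforward $f_*\mathcal L$ is the free $A$-module with basis $\{1, t\}$, so that $f^*f_*\mathcal L$ is free of rank two on the same basis, and a direct calculation shows $\alpha$ takes the form $(a,b) \mapsto (a + bt,\ a - bt)$ while $\beta$ takes the form $(u,v) \mapsto (u - v) \bmod t$. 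Exactness then reduces to elementary linear algebra: $\alpha$ is injective because $2$ is a unit and $t$ is a non-zero-divisor in $A[t]/(t^2-s)$; $\beta$ is surjective because restriction is; and middle exactness says that any $(u,v)$ with $u \equiv v \pmod t$ has the form $(a+bt, a-bt)$ with $a = (u+v)/2$ and $b = (u-v)/(2t)$.

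The main obstacle is purely one of bookkeeping: ensuring that the canonical identifications $\iota^*f^* \cong f^*$ and $(\iota^*\mathcal L)|_R \cong \mathcal L|_R$ are implemented consistently, so that the abstractly defined maps $\alpha$, $\beta$ really specialise to the explicit formulas in local coordinates. Once that matching is checked, the exactness is an immediate consequence of the local model, and the lemma follows.
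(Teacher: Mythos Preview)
Your proposal is correct and follows essentially the same path as the paper: define $\alpha$ and $\beta$ canonically via adjunction and restriction, then reduce to a local check after trivialising $\mathcal L$. The only cosmetic difference is that you carry out the middle-exactness verification by an explicit coordinate computation in the model $A[t]/(t^2-s)$, whereas the paper argues more abstractly on stalks and invokes Nakayama's lemma together with Grauert's theorem (passing to fibres of the locally free sheaves) to conclude.
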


\proof
Since $f_*\iota^*\mathcal L=f_*\iota_*\iota^*\mathcal L=f_*\mathcal L$, by the base-change formula, and
$\mathcal L|_R\cong \iota^*\mathcal L|_R$, 
the morphisms $\alpha$, $\beta$ from the statement are naturally defined.
On the other hand, exactness is a local property, and hence
we can assume that $\mathcal L\cong \mathcal O_Y$.
For any point $y\in Y$, the induced short sequence
on stalks becomes
\[
0\to (f_*\mathcal O_Y)_{f(y)}\otimes_{\mathcal O_{X,f(y)}}\mathcal O_{Y,y}
\stackrel{\alpha_y}{\longrightarrow} \mathcal O_{Y,y}\oplus\mathcal O_{Y,\iota(y)}
\stackrel{\beta_y}{\longrightarrow}\mathcal O_{R,y}\to 0,
\]
where $\alpha_y:s\otimes\sigma\mapsto(s\sigma,s\iota^*\sigma)$, 
and $\beta_y:(s_1,s_2)\mapsto
(s_1|_R-s_2|_R)$. Obviously, if $y\not\in R$, then
$\mathcal O_{R,y}=0$, and $\alpha_y$ is an isomorphism.
moreover, for any $y$, $\alpha_y$ is injective, $\beta_y$ 
is surjective, and $\beta_y\circ \alpha_y=0$.
The exactness in the middle follows from Nakayama's lemma
and from the Grauert theorem, passing to the fibres of
vector bundles for the morphism $\alpha$.
\endproof

\proof(of Theorem \ref{thm:iso})
Let $m$ and $m'$ be the orders of the torsion
parts of $\mathrm{NS}(X)$ and $\mathrm{NS}(X')$ respectively.
Consider $\mathcal E\to X\times X'$ the universal
rank-two vector bundle; it is not uniquely defined, see below. 
The proof strategy is to apply  the Fourier-Mukai
transform and the induced determinant map,
to the structure sheaves of fibres
of $X\to B$, $X'\to B$. More precisely,
let $K(X)$ and $K(X')$ be
the Grothendieck groups, and 
consider $\lambda_{\mathcal E}:K(X')\to \mathrm{Pic}(X)$ 
the composition \cite{HL}, p. 214
\[
 K(X')\stackrel{FM_{\mathcal E}}{\longrightarrow}
K(X)\stackrel{\mathrm{det}}{\longrightarrow}\mathrm{Pic}(X),
\]
where $FM_\mathcal E$ denotes the Fourier-Mukai transform
with respect to $\mathcal E$, and $\varphi_{\mathcal E}
:=c_1\circ \lambda_{\mathcal E}$. 
Note that the restriction of $\lambda_\mathcal E$ to
$\mathrm{Pic}(X')$ is continuous, as is easily seen using Grauert's base change theorem, \cite{BS} Thm. 3.3.4.
Let $F'\subset X'$ 
be an elliptic fibre. Since $\mathcal O_{X'}$ and $\mathcal O_{X'}(-m'F')$
both belong to $\mathrm{Pic}^0(X')$, it follows
that $\varphi_{\mathcal E}([\mathcal O_{m'F'}])=0$.
On the other side, we will show that $\varphi_{\mathcal E}(\mathcal O_{F'})=
c_1(\mathcal O_{F})$, where $F$ is a fibre of $X\to B$;
note that $[\mathcal O_{kF'}]=k\cdot [\mathcal O_{F'}]$ and
$[\mathcal O_{\sum_i{F_i'}}]=\sum_i[\mathcal O_{F_i'}]$
in the Grothendieck group of $X'$.
This eventually implies that $m\ |\ m'$ and
we apply Lemma \ref{lemma:unramified}. 

\medskip

We prove that $\varphi_{\mathcal E}([\mathcal O_{F'}])=
c_1(\mathcal O_{F})$.
Let $b\in B$ be a point. It corresponds to
a spectral cover $f:C\to B$. 
Let $X'_b\cong\mathrm{Prym}(C/B)\cong T^*$ be the 
fibre of $X'\to B$ and set
$\mathcal E_b:=\mathcal E|_{X\times X'_b}$.

\medskip

{\em Claim.} The Poincar\'e bundle on $C\times \mathrm{Jac}(C)$
induces by restriction a universal line bundle $\mathcal P$
on $C\times \mathrm{Prym}(C/B)$. This universal bundle is
uniquely determined by the choice of a point $c\in C$, modulo
the involution $\iota:C\to C$ in the following way:
$\mathcal P|_{c\times \mathrm{Prym}(C/B)}\cong \mathcal O$
and $\mathcal P|_{\iota(c)\times \mathrm{Prym}(C/B)}\cong \mathcal O$.
The choice of another point $c'\in C$, $c'\not\in\{c,\iota(c)\}$
will change $\mathcal P$ by a twist with a pullback
of a line bundle on $\mathrm{Prym}(C/B)$.

\medskip

Consider now the cartesian diagram
\xycenter{
X_C \times X'_b \ar[r]^{f_{X\times X'_b}} \ar[d]^{\mathrm{pr}_{X_C}} & 
X\times X'_b \ar[d]^{\mathrm{pr}_{X}} \\
X_C \ar[r]^{f_X} \ar[d]^{\pi_C} & X \ar[d]^{\pi} \\
C \ar[r]^{f}  & B.
}
Then, by base-change 
\[
 f^*_X\left(R^\bullet\mathrm{pr}_{X,*}\mathcal E_b\right)
\cong R^\bullet\mathrm{pr}_{X_C,*}\left(f^*_{X\times X'_b}\mathcal E_b\right).
\]

Following Theorem 4.5 \cite{Brinzanescu-Moraru2},
$\mathcal E_b\cong 
f_{X\times X'_b,\ *}(\mathrm{pr}_{X_C}^*(L)\otimes \widetilde{\mathcal P})$
where $L\in\mathrm{Pic}(X_C)$ corresponds to $\Sigma_C$ and $\widetilde{\mathcal P}$
is the inverse image of a suitable
universal bundle $\mathcal P$ on ${C\times \mathrm{Prym}(C/B)}$
on $X_C\times X'_b$. The bundle $\mathcal P$ depends, as
mentioned above, on a trivialization along a particular fibre $\{c,\iota(c)\}$ of $f$.
This is imposed by our choice of the universal bundle $\mathcal E$.
Note indeed that we may modify $\mathcal E$ twisting with a pull-back
of a line bundle on $X'$, and $X'_b\cong \mathrm{Prym}(C/B)$.
Note also that we can arrange $c$ to be outside the ramification locus,
by changing another point $b$ if necessary.

Put
\[
 \mathcal F_b:=f_{X\times X'_b}^*(\mathcal E_b)
=f_{X\times X'_b}^*(f_{X\times X'_b,\ *}(\mathrm{pr}_{X_C}^*(L)
\otimes \widetilde{\mathcal P})).
\]

From Lemma \ref{lem:double cover},
$\mathcal F_b$ is presented as an elementary modification
\begin{equation}
\label{eqn:modif}
  0\to \mathcal F_b\to(\mathrm{pr}_{X_C}^*(L)\otimes \widetilde{\mathcal P})
\oplus \widetilde{\iota}^*(\mathrm{pr}_{X_C}^*(L)\otimes \widetilde{\mathcal P})
\to (\mathrm{pr}_{X_C}^*(L)\otimes \widetilde{\mathcal P})|_R\to 0,
\end{equation}
where $\widetilde{\iota}$ is the induced involution on $X_C\times X'_b$ associated to the
double cover, and $R$ is the ramification divisor. 

Note that the restriction of
$\mathrm{pr}_{X_C}^*(L)\otimes \widetilde{\mathcal P}$ 
to the general fibre of $\mathrm{pr}_{X_C}$
has no global sections,  hence
$R^0\mathrm{pr}_{X_C,*}(\mathrm{pr}_{X_C}^*(L)\otimes \widetilde{\mathcal P})
=L\otimes R^0\mathrm{pr}_{X_C,*}(\widetilde{\mathcal P})=0$ as it is torsion-free.
This implies that $R^0\mathrm{pr}_{X_C,*}(\mathcal F_b)=0$.
However, the restriction of $\mathrm{pr}_{X_C}^*(L)\otimes \widetilde{\mathcal P}$
to the fibre $x\times X'_b$ of $\mathrm{pr}_{X_C}$ over any point $x\in \pi_C^{-1}(\{c,\iota(c)\})$  
will have 1-dimensional space of global sections. By the
Riemann-Roch Theorem applied on the fibres, and by the
semi-continuity Theorem, it follows that 
$R^1\mathrm{pr}_{X_C,*}(\mathcal F_b)$ is torsion,
supported on $\pi_C^{-1}(\{c,\iota(c)\})$, and is of rank one
along $\pi_C^{-1}(\{c,\iota(c)\})$. Then 
$R^1\mathrm{pr}_{X,*}(\mathcal E_b)$ is supported
on the fibre $F:=X_{f(c)}$ of $X\to B$ over $f(c)$, and is
of rank one along this fibre. In particular, 
$c_1(R^1\mathrm{pr}_{X,*}(\mathcal E_b))=c_1(\mathcal O_F)\in \mathrm{NS}(X)$.
Note that the same is true even if $c$ was a ramification point
for $f$; analyze the sequence (\ref{eqn:modif}).
We have proved that $m\ |\ m'$, which concludes the proof.
\endproof

\begin{remark}
 The same statement remains true if we replace $X'$ 
by an \'etale cover $X''\to X'$, induced by some
cover of $B$, and $\mathcal E$ by its pullback. 
In fact, in the proof we do not use the unicity of
representatives in the family.
\end{remark}

\end{document}